\newcommand{\ep}{\varepsilon}
\renewcommand{\phi}{\varphi}
\newcommand{\si}{\sigma}
\newcommand{\ZZ}{{\mathbb Z}}
\newcommand{\QQ}{{\mathbb Q}}
\newcommand{\RR}{{\mathbb R}}
\newcommand{\sig}{\operatorname{sig}}
\newcommand{\oo}{{\bf 1}}
\newcommand*\wbar[1]{
  \hbox{ \kern-0.2em%
    \vbox{%
      \hrule height 0.5pt  
      \kern0.25ex
      \hbox{%
        \kern-0.15em
        \ensuremath{#1}%
        \kern-0.05em
      }%
    }%
  \kern0.05em}%
}
\newtheorem{theorem}{Theorem}[section]
\newtheorem{lemma}[theorem]{Lemma}
\newtheorem{proposition}[theorem]{Proposition}
\newtheorem{corollary}[theorem]{Corollary}
\newtheorem*{main}{Theorem}
\theoremstyle{definition}     
\newtheorem{definition}[theorem]{Definition}
\theoremstyle{remark}
\newtheorem*{remark}{Remark}
\newtheorem{example}{Example}
\title[The Khovanov homology of alternating virtual links]{The Khovanov homology of alternating virtual links}
\author[H. Karimi]{Homayun Karimi}
\address{Mathematics \& Statistics, McMaster University, Hamilton, Ontario}
\email{karimih@math.mcmaster.ca}
\subjclass[2010]{Primary: 57M25, Secondary: 57M27}
\keywords{Alternating virtual knot, Khovanov homology, Rasmussen invariant, signature.}
\date{\today}                 
\begin{document}

\begin{abstract}
In this paper, we study the Khovanov homology of an alternating virtual link $L$ and show that it is supported on $g+2$ diagonal lines, where $g$ equals the virtual genus of $L$. Specifically, we show that $Kh^{i,j}(L)$  is supported on the lines $j=2i-\sigma_{\xi}+2k-1$ for  $0\leq k\leq g+1$ where $\si_{\xi^*}(L)+2g= \si_{\xi}(L)$ are the signatures of $L$ for a checkerboard coloring $\xi$ and its dual $\xi^*$. Of course, for classical links, the two signatures are equal and this recovers Lee's $H$-thinness result for $Kh^{*,*}(L)$. 
Our result applies more generally to give an upper bound for the homological width of the Khovanov homology of any checkerboard virtual link $L$. The bound is given in terms of the \emph{alternating genus} of $L$, which can be viewed as the virtual analogue of the Turaev genus. The proof rests on associating, to any checkerboard colorable link $L$, an alternating virtual link diagram with the same Khovanov homology as $L$. 

In the process, we study the behavior of the signature invariants under vertical and horizontal mirror symmetry. We also compute the Khovanov homology and Rasmussen invariants in numerous cases and apply them to show non-sliceness and determine the slice genus for several virtual knots. Table \ref{ras-table} at the end of the paper lists the signatures, Khovanov polynomial, and Rasmussen invariant for alternating virtual knots up to six crossings.
\end{abstract}

\maketitle
\section*{Introduction}

Khovanov homology was
introduced in \cite{Khovanov-00}.
It is a powerful invariant that is known to detect the unknot by deep results of Kronheimer and Mrowka (\cite{KM-2011}). In \cite{Lee}, Lee modified the differential to define a new homology. The Lee homology of a knot is equivalent to an even integer, which surprisingly yields a powerful concordance invariant, as shown by Rasmussen in \cite{Rasmussen}. Lee proved that the Khovanov homology of alternating links is supported in two lines.  Her result implies that for an alternating knot, Rasmussen's invariant is equal to minus the signature of that knot. She also proved that for alternating knots, Khovanov homology is determined by the signature and the Jones polynomial. To see a generalization of Lee's theorem to tangles see \cite{Bar14}.

Manturov extended Khovanov homology to virtual knots and links, first with $\ZZ/{2}$ coefficients \cite{Manturov-kh} and later for arbitrary coefficients \cite{Manturov-2007}. In \cite{DKK}, Dye, Kaestner and Kauffman reformulated Manturov's approach and extended Lee homology theory to the virtual setting. They also used it to define a Rasmussen invariant for virtual knots. As we shall see, for virtual knots and links, Khovanov homology is not as powerful an invariant as it is for classical knots. For instance, there exist nontrivial virtual knots with trivial Khovanov homology (see Example \ref{ex37}).

Signatures were extended to checkerboard colorable virtual links in \cite{Im-Lee-Lee-2010}, and they depend not only on the link $L$ but also on the choice of checkerboard coloring $\xi$. In particular, instead of a single signature, we have a pair of signatures $(\sigma_\xi,\sigma_{\xi^*})$, where $\xi^*$ is the dual coloring. In Theorem \ref{mirror images}, we examine how the signatures $(\sigma_\xi,\sigma_{\xi^*})$ change under taking the vertical and horizontal mirror images. If $D$ is an alternating link diagram for $L$ with supporting genus equal to $g$, then by Theorem 5.19 in \cite{Karimi}, we have $\sigma_\xi-\sigma_{\xi^*}=2g$. 
In this paper, we prove  that the Khovanov homology of $L$ is supported in $g + 2$ lines, where $g$ is the virtual genus of $L$.

\begin{main}\label{theorem1}
If $D$ is a connected alternating virtual link diagram with genus $g$, and signatures $\sigma_{\xi},\sigma_{\xi^*}$, then its Khovanov homology is supported in $g+2$ lines: $$j=2i-\sigma_{\xi^*}+1,j=2i-\sigma_{\xi^*}-1,\ldots,j=2i-\sigma_{\xi}-1.$$
\end{main}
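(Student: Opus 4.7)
The plan is to adapt Lee's original thinness argument from the classical case to the virtual setting by induction on the number of classical crossings of the alternating virtual diagram $D$, carefully tracking how the signature pair $(\sigma_{\xi}, \sigma_{\xi^*})$ and the virtual genus $g$ behave under the resolution of a crossing. The base case of no classical crossings is immediate: $D$ is a disjoint union of trivial circles on some surface, its Khovanov homology is concentrated in homological degree zero, and the two signatures agree (with $g=0$), so the claim collapses to the standard statement for unlinks.

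For the inductive step, I would pick a crossing $c$ and form the two oriented smoothings $D_0$ and $D_1$. Since the Manturov--Dye--Kaestner--Kauffman version of virtual Khovanov homology is built from the same cube-of-resolutions formalism as in the classical case, the familiar skein long exact sequence
$$\cdots \to Kh^{i-1,j-1}(D_1) \to Kh^{i,j}(D) \to Kh^{i,j-1}(D_0) \to Kh^{i,j}(D_1) \to \cdots$$
(with shifts depending on the sign and orientation of $c$) is still available. By the inductive hypothesis, $Kh^{*,*}(D_0)$ and $Kh^{*,*}(D_1)$ are each supported on the expected thin strips of diagonals dictated by their own signature pairs and genera. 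The theorem will follow from the long exact sequence once one checks that, after the homological and quantum shifts, both strips embed into the target $g+2$ diagonals $j = 2i - \sigma_{\xi^*} + 1, \ldots, j = 2i - \sigma_{\xi} - 1$ of $D$.

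The combinatorial heart of the argument is then to show that one can choose $c$ so that (i) $D_0$ and $D_1$ remain connected alternating virtual link diagrams; (ii) their virtual genera satisfy $g_0, g_1 \leq g$; and (iii) the signature pairs $(\sigma^0_{\xi}, \sigma^0_{\xi^*})$ and $(\sigma^1_{\xi}, \sigma^1_{\xi^*})$ shift from $(\sigma_{\xi}, \sigma_{\xi^*})$ by the precise amounts needed for the diagonals to land inside the target strip. The main tools for this are the Goeritz-matrix description of the virtual signatures developed in \cite{Karimi}, together with Cauchy-interlacing-style bounds controlling how the signature and nullity of a symmetric bilinear form change when a single row/column is altered or deleted --- which is exactly what smoothing a crossing does to the Goeritz forms associated with the two checkerboard colorings $\xi$ and $\xi^*$.

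The main obstacle, absent from the classical case, is controlling both colorings at once: in the virtual setting the two resolutions of $c$ can behave asymmetrically with respect to $\xi$ and $\xi^*$, so that one of the resolved diagrams could naively have virtual genus exceeding $g$, and its supporting diagonals could fall outside the claimed strip. Overcoming this will require the identity $\sigma_{\xi} - \sigma_{\xi^*} = 2g$ from Theorem 5.19 of \cite{Karimi} to constrain the permissible shifts on both sides simultaneously, together with a case analysis according to which of the two shaded regions of $D$ the chosen crossing $c$ borders. Once this case analysis is in place, the arithmetic of the diagonals matches up and the theorem follows from the long exact sequence.
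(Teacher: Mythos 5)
Your overall skeleton --- induction on the number of classical crossings together with the skein long exact sequence relating $D$, $D(*0)$ and $D(*1)$ --- is exactly the paper's strategy. Where you diverge is in the combinatorial core. You propose to track the signature pairs of the two resolutions directly, via the Goeritz matrices and Cauchy-interlacing bounds. The paper never touches the signatures during the induction: it renormalizes the complex to $\wbar{C}(D)=C(D)[n_{-}]\{2n_{-}-n_{+}\}$, restates the claim as saying that $\wbar{H}(D)$ is supported on the $g+2$ lines with $y$-intercepts $-|s_{\partial}|+2,\ldots,-|s_{\partial}|-2g$ (the identity $\sigma_{\xi^*}=|s_{\partial}|-1-n_{+}$ from \cite{Karimi} is invoked only when shifting back at the end), and then controls everything with the Euler-characteristic identity $|s_{\partial}|+|\wbar{s}_{\partial}|=n+2-2g$ of Lemma \ref{boundary lemma}. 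Since the genus is an integer, that identity forces $|\wbar{s}_{\partial}(D(*0))|$ and $|s_{\partial}(D(*1))|$ to differ from the corresponding counts for $D$ by exactly $\pm 1$, giving a clean four-case analysis in which the union of the two (shifted) strips always lands inside the target strip; in particular no special choice of crossing is needed, and the inequality $g(D(*i))\leq g(D)$ you list as a requirement comes out automatically.

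The weak point of your plan is precisely the step you defer. Interlacing only yields that the signature of a symmetric form changes by at most one under deletion or perturbation of a row/column, whereas the induction needs the exact shifts; moreover, smoothing a crossing does not act symmetrically on the two Goeritz matrices (for one coloring it merges two white regions, for the other it changes an off-diagonal entry), and the correction term $\mu_{\xi}$ also moves. To push your route through you would need the additional observation that for an alternating colored diagram the Goeritz form for $\xi$ is negative semi-definite and the one for $\xi^*$ is positive semi-definite, so that the signatures reduce to rank counts and the interlacing inequalities become equalities --- at which point you have essentially rederived the paper's state-component bookkeeping in different clothing. As stated, asserting that the signature pairs ``shift by the precise amounts needed'' is the theorem's content rather than a proof of it, so the proposal has a genuine gap at its central step, even though the surrounding architecture matches the paper's.
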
 

This theorem is the analogue for virtual links of Lee's result on $H$-thinness of Khovanov homology for an alternating classical links \cite{Lee}. It is proved in Section \ref{four} (cf.~Theorem \ref{prop}) by induction on 
the number of crossings of $D$.

For each non-split link diagram, one can construct a surface called the Turaev surface, such that the diagram is alternating on that surface. Given a non-split link $L$, the minimum genus over all diagrams and all surfaces is denoted $g_T(L)$ and called the \emph{Turaev genus}. The Turaev genus equals zero if and only if the link is alternating. In other words, the Turaev genus measures how far a given link is from being alternating.

For classical links, the Turaev genus provides an upper bound for the homological width of the Khovanov homology \cite{Kofman07}. For a given checkerboard diagram $D$ of a virtual link $K$, we can associate an alternating diagram, $D_{alt}$, to $D$, without changing the Khovanov homology. Suppose the new diagram $D_{alt}$ has supporting genus $g$, then by the previous theorem,
 its Khovanov homology, which is the same as the Khovanov homology of $D$, is supported in $g+2$ lines. 
 
For a checkerboard colorable virtual link $L$, we define the alternating genus $g_{alt}(L)$ to be the minimum, over all checkerboard diagrams $D$ for $L$, of the supporting genus of $D_{alt}$. Corollary \ref{cor3} shows that the alternating genus provides an upper bound for the homological width of the Khovanov homology.
When $L$ is classical and non-split, Lemma \ref{lem-alt-Turaev} implies that
$g_{alt}(L) \leq g_T(L)$; thus our result recovers and generalizes the Turaev genus bound for the homological width
of Khovanov homology of classical links (cf.~Corollary 3.1 of \cite{Kofman07}).
 
We also give new computations of the Rasmussen 
invariant, and we apply it to show non-sliceness of several virtual knots.

In Section \ref{sec-1}, we introduce the basic notions of virtual knot theory. In Section \ref{sec-sign}, we define checkerboard colorability and the signatures for virtual links. In Section \ref{two}, we recall the definition of the Khovanov homology for classical knots and links. In Section \ref{three}, we review the extension of Khovanov homology to virtual knots and links. The main result is proved in Section \ref{four}, and in Section \ref{five}, we present computations of signatures, Khovanov polynomials, and Rasmussen invariants of alternating virtual knots up to six crossings.  

\section{Basic Notions} \label{sec-1}

In this section we recall some basic definitions from virtual knot theory, including virtual link diagrams, abstract link diagrams, supporting genus, virtual genus, virtual knot concordance, and the slice genus. 
We then introduce the notion of checkerboard coloring for virtual knots and links. We also recall the notion of the boundary property for virtual link diagrams (see \cite{Boundary}), and relate it to checkerboard colorability.

\subsection*{Virtual link diagrams} A \emph{virtual link diagram} 
is a collection of immersed closed curves in the plane, with a finite number of intersection points all of which are double points. Each double point is either classical or virtual. At classical crossings, we record extra information by specifying which of the two strands goes over the other, and at virtual crossings we  place a small circle around the double point. A virtual link is an equivalence class of virtual link diagrams modulo the \emph{generalized Reidemeister moves} and planar isotopy. The combination of classical and virtual Reidemeister moves is called the generalized Reidemeister moves. See Figure \ref{grms}.

\vspace{.5cm}
\begin{figure}[h!]
\centering\includegraphics[height=20mm]{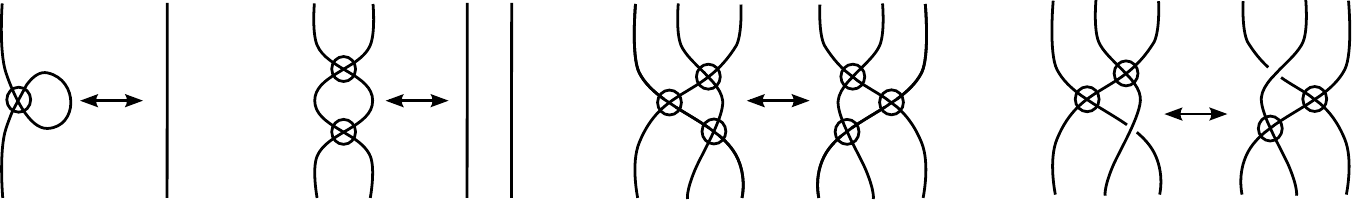}
\caption{The virtual Reidemeister moves.}\label{grms}
\end{figure}
\vspace{.3cm}

\subsection*{Abstract link diagrams} Suppose $S$ is a compact oriented surface with boundary. Let $D$ be a link diagram on $S$ with no virtual crossings. We denote by $|D|$, the graph obtained by replacing each classical crossing in $D$ by a tetravalent vertex. We say $P=(S,D)$ is an \emph{abstract link diagram}\index{abstract link diagram} (ALD) if $|D|$ is a deformation retract of $S$.   

Let $\Sigma$ be a closed, connected and oriented surface and $f:S\to \Sigma$ be an orientation preserving embedding. We call $(\Sigma,f(D))$ a \emph{realization}\index{realization} of $P$. 

Given a virtual link diagram, we can construct an abstract link diagram (see \cite{KK00}). We review that construction here. 

Let $D$ be a virtual link diagram with $n$ classical crossings and
$U_{1},U_{2},\ldots,U_{n}$ regular neighborhoods of the crossings of $D$. Put $W=\text{cl}(\RR^2-\cup_{i=1}^{n}U_{i})$. Thickening the arcs and loops of $D\cap W$,
we obtain immersed bands and annuli in $W$ whose cores are $D\cap W$. Their union  together with $U_{1},U_{2},\ldots,U_{n}$ forms an immersed disk-band surface $N(D)$ in
the plane. Modifying $N(D)$ as shown below at each virtual crossing of $D$, we obtain a
compact oriented surface $S_{D}$ embedded in $\RR^3$, and a diagram $\widetilde{D}$ on $S_{D}$ corresponding to $D$. We call the pair $P=(S_{D},\widetilde{D})$ the \emph{abstract link diagram associated to $D$}. 

\vspace{.3cm} 
\begin{figure}[h!]
\centering\includegraphics[height=20mm]{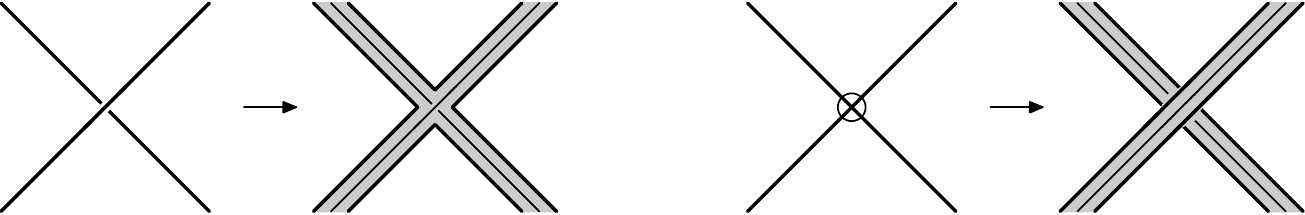}
\caption{Modifying $N(D)$ at a virtual crossing.}
\end{figure}
\vspace{.3cm}

The \emph{supporting genus}\index{supporting genus} of an ALD $P=(\Sigma,\widetilde{D})$ is denoted by $sg(P)$, and is defined to be the minimal genus among the realization surfaces $F$ of $P$. The supporting
genus of a virtual link diagram $D$ is defined to be the supporting genus of
the ALD $P=(S_{D},\widetilde{D})$ associated with $D$ and denoted by $sg(D)$. The \emph{virtual
genus}\index{virtual
genus} of a virtual link $L$ is denoted by $g_v(L)$ and defined to be the minimum number among the
supporting genus $sg(D)$, where $D$ runs over all virtual link diagrams representing
$L$.

Let $L$ be a virtual link. A virtual link diagram $D$ representing $L$ such that $sg(D)=g_v(L)$ is called a \emph{minimal diagram}\index{minimal diagram} of $L$.

\subsection*{Virtual knot concordance} We recall the notions of cobordism and concordance of virtual knots.
\begin{definition}
\noindent {\bf (i)} Two knots $K_0\subset \Sigma_0\times I$ and $K_1\subset \Sigma_1\times I$ in thickened surfaces are called \emph{virtually cobordant}\index{virtually cobordant} if there exists a compact connected oriented
$3$-manifold $W$ with $\partial W\cong-\Sigma_0\sqcup\Sigma_1$ and an oriented surface $S \subset W\times I$ with $\partial S = -K_0\sqcup K_1$.\\
\noindent {\bf (ii)} The knots $K_0, K_1$ in part (i) are called \emph{virtually concordant} if the surface $S$ can be chosen to be an annulus.
\end{definition} 

Given a knot $K$ in the thickened surface $\Sigma\times I$, an elementary argument shows that there exists a compact oriented $3$-manifold $W$ and compact oriented surface $S \subset W\times I$ with $\partial S =K.$ Consequently, every virtual knot is cobordant to the unknot.
This motivates the following definition.  

\begin{definition}
Suppose $K$ is a knot in a thickened surface $\Sigma \times I.$ \\
\noindent {\bf (i)} The \emph{slice genus}\index{slice genus} of $K$, denoted $g_{s}(K)$, is the minimum genus of $S$, over all  3-manifolds $W$ with $\partial W =\Sigma$ and over all surfaces $S \subset W \times I$ with $\partial S =K.$ \\
\noindent {\bf (ii)} The knot $K$ is called \emph{virtually slice}\index{virtually slice} if $g_s(K)=0.$ Equivalently, $K$ is virtually slice if it bounds a disk $\Delta\subset W\times I$.
\end{definition}

\section{Signatures of checkerboard colorable virtual links} \label{sec-sign}

\subsection*{Checkerboard colorings} We review checkerboard colorings for virtual knots and links and recall the construction of the Goeritz matrices, which are used to define signatures for checkerboard colorable virtual links.
 
\begin{definition}
Given $P=(F,D)$, where $F$ is a compact, connected, oriented surface and $D$ is a link diagram on $F,$ a \emph{checkerboard
coloring} $\xi$ is an assignment to each region of $F \smallsetminus |D|$ one of two colors, say black and white, such that any two
adjacent regions sharing an edge of $|D|$ have different colors. Define the dual checkerboard coloring $\xi^*$ to be the one obtained from $\xi$ by interchanging
black and white regions.
\end{definition}

For a crossing $c$, we have the following pictures:

\vspace{.3cm} 
\begin{figure}[h!]
\centering\includegraphics[height=24mm]{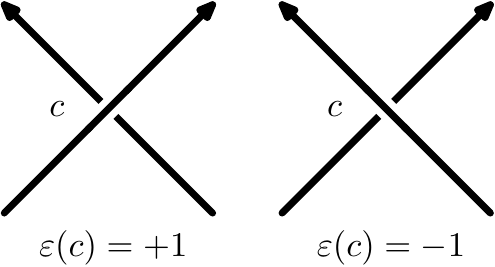} \qquad \includegraphics[height=24mm]{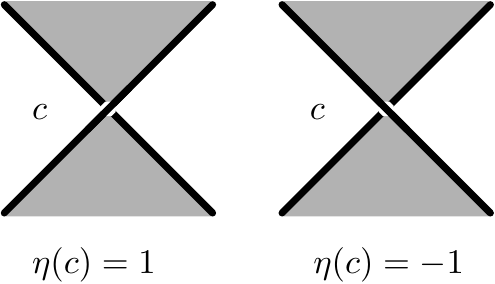}  \qquad \includegraphics[height=24mm]{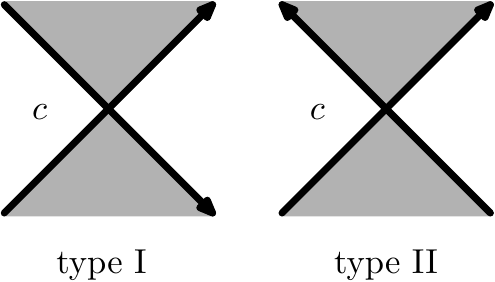}
\caption{From left to right, a positive and negative crossing, a type $A$ and type $B$ crossing, and a type I and type II crossing.}
\end{figure}
\vspace{.3cm}

They are a positive crossing and a negative crossing, a type $A$ crossing with $\eta(c)=+1$ and a type $B$ crossing with $\eta(c)=-1$ in a checkerboard colored link diagram, and a type I and a type II crossing in an oriented, colored link diagram, respectively. We call $\ep(c)$ the \emph{writhe} of the crossing and $\eta(c)$ the \emph{incidence} of the crossing $c$. An elementary calculation shows that a crossing $c$ has type II if $\ep(c) \eta(c) = 1,$ otherwise it has type I.

Let $\xi$ be a checkerboard coloring for a pair $P=(F,D)$, where $F$ is a closed, oriented and connected
surface and $D$ is a link diagram on $F$. 
We enumerate the white regions of
$F \smallsetminus |D|$ by $X_{0},X_{1},\ldots,X_{m}$. Let $C(D)$ denote the set of all classical crossings of
$D$ on $F$. For each pair $i,j\in\{1,2,\ldots,m\}$, 
let $$C_{ij}(D)=\{c\in C(D)\ \mid\ c\ \text{is adjacent to both}\ X_{i}\ \text{and}\ X_{j}\},$$ and
define  
$$
g_{ij}=\begin{cases}
-\sum\limits_{c\in C_{ij}(D)}\eta(c),\ \ \text{for}\ i\neq j,\\
-\sum\limits_{k=0; k\neq i}^m g_{ik},\ \ \text{for}\ i=j.
\end{cases}
$$

The pre-Goeritz matrix of $D$ is defined to be the symmetric integral
matrix $G_\xi'(D)=(g_{ij})_{0\leq i,j\leq m}$, and the Goeritz matrix of $D$ is the principal minor $G_\xi(D)=(g_{ij})_{1\leq i,j\leq m}$ obtained by removing the first row and column of $G_\xi'(D).$

The correction term is defined by setting $\mu_{\xi}(D)=\sum\limits_{c\ \text{is type II}}\eta(c)$. 

\begin{definition} Suppose $D$ is a checkerboard colorable link diagram on a surface $F$ with a checkerboard coloring $\xi$, we define the signature as follows:
$$\sigma_{\xi}(D)=\sig(G_\xi(D))-\mu_{\xi}(D).$$
\end{definition}

The following result is proved in \cite{Im-Lee-Lee-2010}.
\begin{theorem}[Im-Lee-Lee]
If $L$ is a non-split checkerboard link represented by a diagram $D$ of minimal genus and with coloring $\xi$, then the pair $\{\sigma_{\xi}(D), \sigma_{\xi^*}(D)\}$
of signatures is independent of the choice of virtual link diagram and gives a well-defined invariant of the virtual link $L$. 
\end{theorem}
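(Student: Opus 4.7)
The plan is to establish invariance of the pair $\{\sigma_\xi(D),\sigma_{\xi^*}(D)\}$ under the generalized Reidemeister moves, restricted to the class of minimal genus checkerboard diagrams. I would first dispense with the easy moves: the purely virtual Reidemeister moves and the detour/mixed move do not alter the set of classical crossings, their writhes, their incidences, or the adjacency pattern of the white regions of $F\smallsetminus|D|$. Consequently $G_\xi(D)$ and $\mu_\xi(D)$ are literally unchanged, and hence so are $\sigma_\xi$ and $\sigma_{\xi^*}$.

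The core of the argument is the analysis of the three classical Reidemeister moves, performed locally on the surface $F$ and following the Gordon--Litherland strategy adapted to the checkerboard colored surface setting. For R1, the added kink creates one new classical crossing; the region structure changes by splitting off a small disk, and $G_\xi(D)$ acquires a rank one stabilization by $(\pm 1)$. This shifts $\sig(G_\xi)$ by $\pm 1$, and a case-check by crossing type (A vs.\ B, combined with the sign of the kink, which governs whether the new crossing is type I or II) shows that $\mu_\xi$ shifts by exactly the same amount, so that $\sigma_\xi$ is preserved. The same analysis for the dual coloring $\xi^*$ yields invariance of $\sigma_{\xi^*}$. For R2, the two new crossings contribute a hyperbolic $2\times 2$ block of signature $0$ to the Goeritz matrix, and their contributions to $\mu_\xi$ cancel, for either choice of coloring. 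For R3, a direct computation shows that the new pre-Goeritz matrix is obtained from the old one by conjugation by a unimodular integer matrix and is therefore signature-preserving; the correction term is invariant because the set of classical crossings and their types are unchanged.

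A subtlety is that a classical Reidemeister move on $D$ corresponds to a move on the associated ALD $P=(S_D,\widetilde D)$, which in principle could change the supporting genus; one must check that among minimal genus diagrams of $L$, it suffices to use moves that preserve minimality, which follows from the Kuperberg-type uniqueness theorem for minimal genus realizations of virtual links. A second subtlety is that when comparing two different diagrams $D, D'$ of $L$, there is no canonical way to identify the coloring $\xi$ on $D$ with a specific coloring on $D'$; the two colorings correspond under the move only up to the involution $\xi\leftrightarrow\xi^*$. This is precisely why the statement packages the two signatures as an unordered pair: on a fixed diagram, swapping $\xi$ with $\xi^*$ swaps the two entries of the pair, and under Reidemeister moves the coloring extends naturally to match one of the two choices on the resulting diagram.

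The main obstacle I anticipate is the R1 case and the bookkeeping of the compensation between $\sig(G_\xi)$ and $\mu_\xi$: one must track carefully which white region is being altered, whether the new crossing is type A or B with respect to $\xi$ (and separately with respect to $\xi^*$), and verify that the writhe/incidence data combine to give exactly the right cancellation in all four sub-cases. The rest of the argument is largely local and reduces, after this case analysis, to standard linear-algebraic manipulations of symmetric integer matrices.
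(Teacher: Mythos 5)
The paper does not prove this statement; it quotes it directly from Im--Lee--Lee \cite{Im-Lee-Lee-2010}, and your outline is essentially the argument given there: Gordon--Litherland-style invariance of $\sig(G_\xi)-\mu_\xi$ under the classical Reidemeister moves on the supporting surface, triviality of the virtual/detour moves, Kuperberg's uniqueness of the minimal genus realization to reduce to moves within one surface, and the $\xi\leftrightarrow\xi^*$ ambiguity explaining why only the unordered pair is an invariant. The plan is sound; the only detail to watch in R1 is that when the new monogon is black the Goeritz matrix is unchanged (rather than stabilized) and one must instead verify the new crossing does not contribute to $\mu_\xi$, which is part of the four-case check you already anticipate.
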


Given a virtual link diagram $D$, for each classical crossing, we can resolve the crossing into a $0$-smoothing or a $1$-smoothing (see Figure \ref{resolving}).  

\vspace{.3cm} 
\begin{figure}[h!]
\centering\includegraphics[height=25mm]{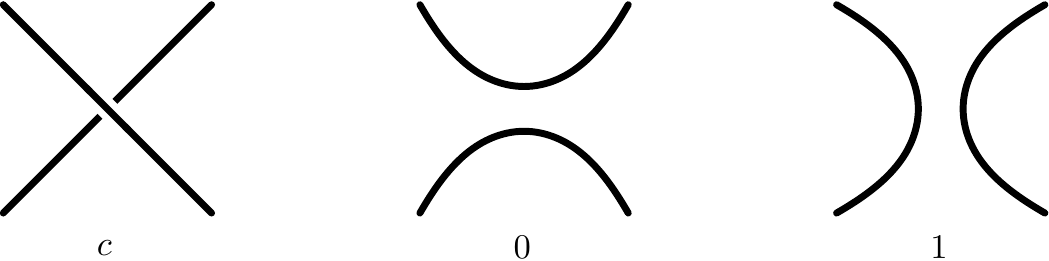}
\caption{The $0$- and $1$-smoothing of a crossing.}\label{resolving}
\end{figure}
\vspace{.3cm}

If we resolve all the classical crossings, the resulting diagram is called a \emph{state}\index{state}. A state is a virtual link diagram with only virtual crossings, i.e.~it is a diagram of the unlink. For a link diagram with $n$ classical crossings, we have $2^{n}$ states.  In fact, once an ordering of the crossings $\{c_1, \ldots,c_n\}$ has been fixed, the states are in one-to-one correspondence with binary strings of length $n$. For a given state $s$, the dual state is denoted $\wbar{s}$ and it is  obtained from $s$ by changing all 0-smoothings to 1-smoothings, and vice versa. In other words, if $s$ corresponds to the binary word with $i$-th entry $s_i \in \{0,1\}$, then $\wbar{s}$ corresponds to the binary word with $i$-th entry $\wbar{s}_i = 1-s_i.$

\begin{definition}\label{boundary}
Let $D$ be a virtual link diagram, and $(S_{D},\widetilde{D})$ be the abstract link diagram associated with $D$. Then $D$ has the \emph{boundary property}\index{boundary property} if there exists a state $s_{\partial}$ such that $\partial S_{D}=s_{\partial}\cup \wbar{s}_{\partial}$, where $\wbar{s}_{\partial}$ is the dual state of $s_{\partial}$. 
\end{definition} 

The following lemma relates the boundary property to checkerboard colorability. 

\begin{lemma} \label{lem-bound-prop}
A virtual link diagram $D$ has the boundary property if and only if it is checkerboard colorable. 
\end{lemma}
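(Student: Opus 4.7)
The plan is to establish the equivalence through a direct geometric correspondence between states and components of $\partial S_D$. The key observation is that in an ALD $(S_D,\widetilde{D})$, the surface deformation-retracts onto the spine $|D|$, so each component of $S_D\smallsetminus|D|$ is a half-open annular neighborhood of exactly one boundary component of $S_D$. This yields a canonical bijection between the regions of $S_D\smallsetminus|D|$ and the components of $\partial S_D$, which is the device that will translate region-colorings into boundary-partitions and vice versa.

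For the forward direction, I would start with a checkerboard coloring $\xi$ and define the state $s_\xi$ locally: at each classical crossing the four adjacent quadrants alternate in color, so exactly one of the two smoothings separates the two white quadrants from each other (equivalently, fuses the two black quadrants into a single local region). Taking this smoothing at every crossing determines $s_\xi$. The geometric claim to verify is that each state circle of $s_\xi$ is isotopic in $S_D$ to a boundary component of $S_D$ lying in the closure of a black region, and that $\bar s_\xi$ (obtained by flipping every smoothing) gives the boundary components lying in the closure of the white regions. I would prove this by tracing: push the state curve of $s_\xi$ slightly into the black side along each edge of $|D|$; on an edge it runs parallel to $|D|$ inside a black region, and at each crossing the $s_\xi$-smoothing arc is precisely the shortcut through the crossing disk that matches the rounding of the corner of the black boundary. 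Running this argument simultaneously at every crossing and every edge shows the state circle coincides with a parallel push-off of a component of $\partial S_D$ bordering a black region. Summing over components, $\partial S_D = s_\xi \sqcup \bar s_\xi$ in $S_D$.

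For the converse, assume a state $s_\partial$ exists with $\partial S_D = s_\partial \cup \bar s_\partial$. Using the bijection between boundary components of $S_D$ and regions of $S_D\smallsetminus|D|$, declare a region to be \emph{white} if its corresponding boundary component lies in $s_\partial$ and \emph{black} if it lies in $\bar s_\partial$; this is a well-defined 2-coloring of all regions. To see it is a checkerboard coloring, I would verify the local condition at each classical crossing $c$: the four quadrants at $c$ correspond to four boundary arcs on the crossing disk, and the smoothing $s_\partial$ at $c$ pairs these four arcs into two pairs of opposite arcs (since any single-crossing smoothing joins opposite pairs). The arcs in a single pair belong to the same boundary component of $S_D$, hence to the same side ($s_\partial$ or $\bar s_\partial$); because the pairing is across opposite quadrants, the coloring cycles black--white--black--white around $c$. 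This is exactly the checkerboard condition at $c$, and it also forces the two regions on either side of each edge of $|D|$ to have opposite colors, giving global checkerboard colorability.

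The main obstacle is the geometric lemma in the forward direction: identifying the state circles of $s_\xi$ with the push-offs of the boundary components of $S_D$ along the colored regions. Once that identification is set up carefully on a single crossing disk and on a single band, the global assembly and the converse are essentially bookkeeping; but the local picture must be drawn and checked against the convention used to define $0$- versus $1$-smoothings (Figure \ref{resolving}) so that the claim \textit{``the state that separates white regions hugs the black boundary components''} is unambiguous and orientation-consistent.
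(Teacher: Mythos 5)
Your argument is correct and is essentially the paper's proof with the geometric details filled in: the paper likewise colors regions according to which half of $\partial S_D = s_\partial \cup \wbar{s}_\partial$ they abut, and for the converse defines $s_\partial$ by $0$-smoothing the crossings with $\eta(c)=+1$ and $1$-smoothing those with $\eta(c)=-1$, which is precisely your ``choose the smoothing that separates the two white quadrants'' rule. The only slip is the claim that the two opposite corner arcs paired by a smoothing lie on the \emph{same} boundary component of $S_D$ --- in general they lie on two components that merely belong to the same state $s_\partial$ or $\wbar{s}_\partial$, which is all your local checkerboard verification actually needs.
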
 

\begin{proof}
Suppose $D$ has the boundary property and define a checkerboard coloring $\xi$ as follows. Let the white regions be those with boundary a component of $s_{\partial}$, and let the black regions be those with boundary a components of $\wbar{s}_{\partial}$. This gives a checkerboard coloring $\xi$ for $D$.

Conversely, suppose $\xi$ is a checkerboard coloring of the abstract link diagram $(S_D, \widetilde{D}).$ Let $s_\partial$ be the state obtained by performing 0-smoothing to all crossings $c$ with $\eta(c) = +1$ and 1-smoothing to all crossings $c$ with $\eta(c) = -1$, and let $\wbar{s}_{\partial}$ be the dual state. Then it can be easily checked that $\partial S_D = s_\partial \cup \wbar{s}_\partial,$ therefore $D$ has the boundary property.
\end{proof}

\subsection*{Alternating virtual links}
A virtual link is called \emph{alternating} if it admits a virtual link diagram whose crossings alternate between over and under crossing as one travels around each component of the link.
Since every alternating virtual link diagram is checkerboard colorable (see \cite{Kamada-2002}), it follows that every alternating virtual link diagram has the boundary property. 

Let $|s_{\partial}|$ and $|\wbar{s}_{\partial}|$ be the number of components of $s_{\partial}$ and $\wbar{s}_{\partial}$, respectively. 

\begin{lemma}\label{boundary lemma}
Suppose $D$ is a virtual link diagram with $n$ classical crossings. 
If $S_{D}$ has genus $g$ and $D$ has the boundary property, then  
$$|s_{\partial}|+|\wbar{s}_{\partial}|=n+2-2g.$$ 
\end{lemma}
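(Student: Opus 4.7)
The plan is to use Euler characteristic considerations, together with the structure of the abstract link diagram. The key observation is that the underlying 4-valent graph $|D|$, which is a deformation retract of $S_D$, has Euler characteristic $-n$: with $n$ vertices and $2n$ edges (since the valence-sum equals $4n$), we get $\chi(|D|) = n - 2n = -n$. Since $|D| \hookrightarrow S_D$ is a homotopy equivalence, $\chi(S_D) = -n$ as well.

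Next, I would invoke the classification of compact oriented surfaces with boundary: if $S_D$ has genus $g$ and $b$ boundary components, then $\chi(S_D) = 2 - 2g - b$. Combining this with $\chi(S_D) = -n$ gives $b = n + 2 - 2g$.

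It remains to identify $b$ with $|s_\partial| + |\wbar{s}_\partial|$. This is immediate from the boundary property: by Definition \ref{boundary}, $\partial S_D = s_\partial \cup \wbar{s}_\partial$ as a disjoint union of simple closed curves in $S_D$, so the number of boundary components of $S_D$ is exactly the total number of components of $s_\partial$ and $\wbar{s}_\partial$. Assembling the three identities yields
\[
|s_\partial| + |\wbar{s}_\partial| = b = n + 2 - 2g.
\]

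There is no serious obstacle here; the only thing to be careful about is justifying that each state $s_\partial$, $\wbar{s}_\partial$ really contributes its components as \emph{distinct} boundary circles of $S_D$ (i.e.\ that the two states together account for \emph{every} boundary component exactly once), which follows directly from the definition of the boundary property. Thus the lemma reduces to a clean Euler-characteristic count.
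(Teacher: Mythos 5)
Your proof is correct and follows essentially the same route as the paper: both are Euler characteristic counts on the disk--band surface $S_D$, with the paper capping off the boundary circles by disks and counting cells of the resulting closed surface, while you compute $\chi(S_D)=-n$ via the retraction to $|D|$ and invoke $\chi=2-2g-b$ directly. The identification of the boundary components with the components of $s_\partial\cup\wbar{s}_\partial$ is handled the same way in both arguments, so there is nothing substantive to add.
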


\begin{proof}
Attach disks to the boundary components of $S_{D}$ to get a closed surface $\Sigma$. There is a cell decomposition on $\Sigma$, defined as follows: There is a one-to-one correspondence between  the classical crossings of $D$ and $0$-cells, bands of $S_{D}$ and $1$-cells, and $2$-disks that we attached to $S_{D}$ and $2$-cells. The Euler characteristic of $\Sigma$ is $2-2g$. And the number of $0$, $1$ and $2$-cells are $n$, $2n$ and $|s_{\partial}|+|\wbar{s}_{\partial}|$, respectively. The lemma now follows.      
\end{proof}

The proof of the next result is elementary and is left to the reader.

\begin{lemma}\label{eta}
If $D$ is a connected checkerboard colorable link diagram, then $D$ is alternating if and only if all its crossing have the same incidence number.
\end{lemma}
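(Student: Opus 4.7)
The plan is to reduce the lemma to a purely local comparison at two adjacent classical crossings $c_1, c_2$ joined by an edge $e$ of $|D|$, and then to use connectedness of $|D|$ to propagate the comparison across the whole diagram.

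First I would record two basic facts. (a) The checkerboard coloring is consistent along any edge $e$ of $|D|$: the black region sits on one fixed side of $e$ all the way from $c_1$ to $c_2$ (passing unchanged through any intermediate virtual crossings, which play no role in the coloring on the ALD). (b) The incidence $\eta(c)$ at a crossing depends only on the joint local data of which pair of opposite regions at $c$ is black and which pair of opposite edges at $c$ is the over-strand, together with the fixed rotational convention used to define $\eta$.

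Next I would carry out the local comparison at $e$. Drawing the standard picture of a crossing with its four surrounding regions, fact (a) identifies the coloring at $c_1$ with the coloring at $c_2$ (the black region stays on the same side of $e$). One then has two cases. In the alternating case, where $e$ is an over-strand arc at one endpoint and an under-strand arc at the other, the over-strand directions at $c_1$ and $c_2$ differ by $90^\circ$; a direct inspection of the picture, using (b), yields $\eta(c_1) = \eta(c_2)$. In the non-alternating case, where $e$ is an over-strand (or under-strand) arc at both endpoints, the over-strand directions at $c_1$ and $c_2$ agree, and the same inspection yields $\eta(c_1) = -\eta(c_2)$.

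To conclude, since $|D|$ is connected, any two classical crossings of $D$ are joined by a path of edges of $|D|$. If $D$ is alternating, the local comparison preserves the incidence along each edge of such a path, so all crossings share a common value of $\eta$. Conversely, if all crossings have the same incidence, then no edge of $|D|$ can have the same over/under status at both endpoints, so traveling along any component of the link the over/under statuses must alternate, which is the definition of an alternating diagram. The main obstacle is the local four-case inspection, which is entirely convention-dependent (in particular on the sense of rotation used to define $\eta$); once the convention is pinned down and the four pictures are drawn, the verification is symmetric and routine, requiring no further ingredient.
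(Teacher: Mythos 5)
The paper states this lemma without proof (``elementary and left to the reader''), so there is no argument to compare against; judged on its own, your proposal is correct and is exactly the intended argument. The local check at an edge $e$ of $|D|$ does come out as you claim --- with the black/white sides of $e$ fixed, the incidence is preserved across $e$ precisely when the over/under status flips from one endpoint to the other --- and you correctly use connectedness of $|D|$ only for the forward direction, while the converse follows edge-by-edge from the contrapositive of the local comparison. The only point worth making explicit is that checkerboard colorability guarantees the two sides of each edge of $|D|$ are genuinely distinct regions of opposite colors (even on a higher-genus supporting surface), which is what makes your fact (a) available.
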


%

\subsection*{Unknotting operations}

For a virtual link, we introduce the operations of  \emph{orientation reversal} ({\bf or}), \emph{sign change} ({\bf sc}), and \emph{crossing change} ({\bf cc}) at a given crossing. 
The operations {\bf or} and {\bf sc} are shown in Figure \ref{fig3}, and {\bf cc} is the result of applying  
{\bf or} and {\bf sc}.
As is well-known, crossing change {\bf cc} is an unknotting operation for classical knots and links, but this is no longer true for virtual links. Together with the operation of \emph{chord deletion} ({\bf cd}), which replaces a classical crossing with a virtual one, these moves form a complete set of unknotting operations for virtual knots (see \cite{Boden2}). 
Note that each of {\bf cc}, {\bf or} and {\bf sc} can be achieved in a genus one cobordism.

\begin{figure}[h!]
\centering\includegraphics[height=15mm]{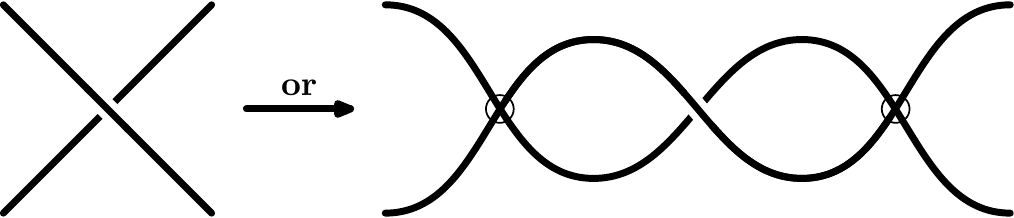}
\qquad
\centering\includegraphics[height=15mm]{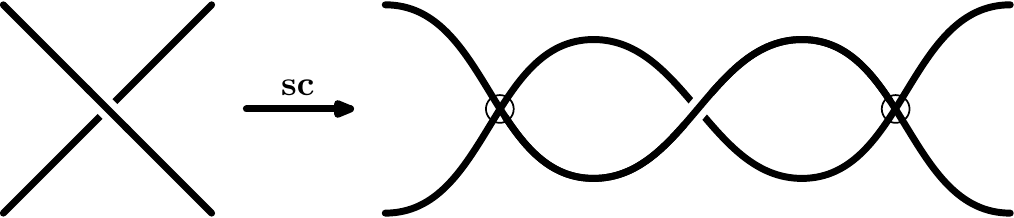}
\caption{The operations {\bf or} and {\bf sc}.}
\label{fig3}
\end{figure}


Given a checkerboard colored diagram, if we apply any one of $\{{\bf cc}, {\bf sc}, {\bf or}\}$ to a crossing,
then the new diagram is again checkerboard colored. We will examine the effect of these operations on the writhe, incidence, and type of the crossing.

If we apply {\bf or} to a crossing $c$, then it is elementary to see that $\eta(c)$ changes sign and the writhe $\ep(c)$ remains the same. If we instead apply {\bf sc}, then the writhe $\ep(c)$ changes sign and $\eta(c)$ remains the same. In particular, under either operation, the type of the crossing changes. 
On the other hand, if we apply {\bf cc} to a crossing $c$, then both the writhe $\ep(c)$ and $\eta(c)$  change signs, but the type remains the same.
These facts are summarized in Table \ref{effect}. 

\begin{table}[h!]
\begin{center}
{\rowcolors{1}{white!80!white!50}{lightgray!70!lightgray!40}
\begin{tabular}{|c|c|c|c|}  \hline
& writhe & incidence & type \\
\hline
 {\bf sc}&$ -\varepsilon(c)$ & $\eta(c)$ & $-\text{type}(c)$ \\
 {\bf or}& $\varepsilon(c)$ & $-\eta(c)$ & $-\text{type}(c)$ \\
 {\bf cc}& $-\varepsilon(c)$ & $-\eta(c)$ & $\text{type}(c)$ \\ \hline
\end{tabular}
}  
\end{center}
\caption{The effect of applying {\bf sc}, {\bf or}, {\bf cc} to the crossing $c$.}\label{effect}
\end{table}

\subsection*{Mirror images of virtual knots}
We define the vertical and horizontal mirror image of a virtual knot and relate their signatures.
\begin{definition}
Given an oriented virtual knot diagram $D$, let $-D$ be the diagram obtained by reversing the orientation of $D$.
The \emph{vertical mirror image} of $D$ is denoted $D^{*}$
and is the diagram obtained by applying {\bf cc} to all crossings of $D$.
The \emph{horizontal mirror image} of $D$ is denoted $D^{\dag}$ and is 
the diagram obtained by applying {\bf sc} to all crossings of $D$.
\end{definition}

\begin{lemma} If $D$ is a minimal genus diagram for a virtual knot $K$, then so are $-D, D^*$ and $D^\dag$.
\end{lemma}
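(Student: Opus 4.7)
The plan is to use the fact that the abstract link diagram $(S_D,\widetilde{D})$—and hence the supporting genus $sg(D)$—depends only on the shadow of $D$, that is, the underlying $4$-valent planar graph equipped with the classical/virtual label at each vertex. First I would verify that each of $-D$, $D^*$, $D^\dag$ has exactly the same shadow as $D$: orientation reversal obviously does nothing to it; \textbf{cc} applied at every classical crossing merely swaps the over/under information, which the shadow does not see; and \textbf{sc} applied at every classical crossing reflects the crossing locally but leaves the $4$-valent vertex and its incident arcs unchanged. Consequently $S_{-D}=S_{D^*}=S_{D^\dag}=S_D$, so
\[
sg(D)=sg(-D)=sg(D^*)=sg(D^\dag).
\]

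Next, each of the three assignments $D\mapsto -D$, $D\mapsto D^*$, $D\mapsto D^\dag$ is an involution that carries a diagram of $K$ to a diagram of $-K$, $K^*$, $K^\dag$ respectively, and is compatible with the generalized Reidemeister moves. It therefore induces a supporting-genus-preserving bijection between the set of virtual link diagrams representing $K$ and the set of diagrams representing $-K$ (respectively $K^*$, $K^\dag$). Taking infima yields
\[
g_v(K)=g_v(-K)=g_v(K^*)=g_v(K^\dag).
\]
Combining the two displays, the hypothesis $sg(D)=g_v(K)$ immediately gives $sg(-D)=g_v(-K)$, $sg(D^*)=g_v(K^*)$, and $sg(D^\dag)=g_v(K^\dag)$, so each of $-D$, $D^*$, $D^\dag$ is a minimal genus diagram for the corresponding virtual knot.

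There is no substantive obstacle here; the proof is essentially a bookkeeping observation. The only point warranting a brief check is that \textbf{sc}, which locally reflects a crossing, does not disturb the construction of the disk--band surface $N(D)$ recalled in Section \ref{sec-1}: that construction uses only the cyclic order of strands at each classical crossing and the pattern of arcs in its complement, both of which are unaffected by a local reflection at the crossing.
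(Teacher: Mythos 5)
Your overall architecture is sound and, for two of the three cases, matches the paper: $-D$ and $D^*$ do have the same shadow as $D$ (orientation reversal changes nothing, and \textbf{cc} only swaps over/under data), so $S_{-D}=S_{D^*}=S_D$ and the supporting genera agree; and your second paragraph, making explicit that each operation is a Reidemeister-compatible involution and hence preserves $g_v$ of the underlying knot, is a point the paper leaves implicit but which is genuinely needed to conclude minimality.

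The gap is in the $D^\dag$ case. Applying \textbf{sc} to a crossing does \emph{not} leave the shadow unchanged: since \textbf{sc} preserves the over/under data and the orientation of the knot but flips the writhe, it must alter the planar structure at the crossing, and concretely it is realized by locally reflecting the crossing and detouring a strand through new virtual crossings (exactly as Figure \ref{last} illustrates for the companion move \textbf{or}; note $\textbf{sc}=\textbf{cc}\circ\textbf{or}$ and \textbf{cc} is shadow-preserving, so \textbf{sc} changes the shadow precisely as \textbf{or} does). Your closing ``brief check'' then resolves this the wrong way: a local reflection \emph{reverses} the cyclic order of the four strand-ends at that crossing, it does not preserve it, and reversing the cyclic order of band-attachments at a single disk of a disk--band surface can change the genus. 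What rescues the statement is that $D^\dag$ reverses the cyclic order at \emph{every} classical crossing simultaneously, so $N(D^\dag)$ is the global mirror image of $N(D)$ rather than the same surface; this is exactly how the paper argues, by placing the ALD $(S_D,\widetilde D)$ in a half-space and reflecting it through the plane $y=0$, which produces an ALD for $D^\dag$ of the same genus. Replace your local ``nothing changes'' claim for \textbf{sc} with this global reflection argument and the proof is complete.
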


\begin{proof}
 It is obvious that if $D$ is a minimal genus diagram for $K$, then $-D$ is a minimal genus diagram for $-K$.

Suppose that $P=(S_D,\widetilde{D})$ is the abstract link diagram associated with $D$.
Place it inside 
$\{(x,y,z)\in\RR^3\mid y<0\}$ in such a way that the projection of $\widetilde{D}$ on the $xy$-plane is $D$. Now reflect $P$ with respect to the plane $y=0$. The result is an abstract link diagram associated with
$D^{\dag}$. This shows that if $D$ is a minimal genus diagram, then $D^{\dag}$ is also minimal genus.

On the other hand, switching all the over-crossings and under-crossings in $\widetilde{D}$, 
we obtain an abstract link diagram for $D^{*}$. It 
follows that if $D$ is a minimal genus diagram, then $D^{*}$ is also minimal genus. 
\end{proof}

Suppose $\xi$ is a checkerboard coloring of $D$ and $\xi^*$ is its dual coloring. Notice that a coloring is determined by the
underlying flat knot. Therefore we can use the same notation for the colorings of the diagrams of the 
mirror images and the inverse knot.  

The following picture shows  a colored crossing in $D$ (left) and $-D$ (right).

\vspace{.3cm}
\begin{figure}[h!]
\centering\includegraphics[height=25mm]{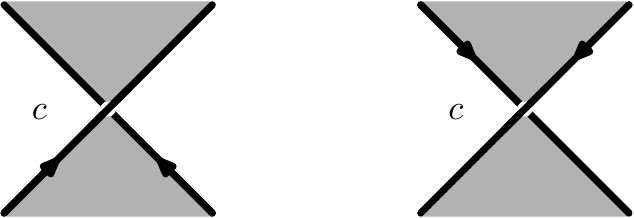}
\caption{A colored crossing in $D$ and $-D$.}
\end{figure}
\vspace{.3cm}

Therefore, at each crossing, the incidence number and type of that crossing are unchanged. Notice that black and white regions are also unchanged. Thus the two signatures for $D$ and $-D$ are the same. 

For $D^{*}$,  at each crossing, the type is unchanged but the incidence 
number changes sign. As a result, both the Goeritz matrix and the correction term are 
multiplied by $-1$. Thus $\sigma_{\xi}(D^{*})=-\sigma_{\xi}(D)$ and
$\sigma_{\xi^*}(D^{*})=-\sigma_{\xi^*}(D)$. 

For $D^{\dag}$, we use the dual coloring $\xi^*$. Thus at each crossing, the type is unchanged but the incidence 
number changes by a negative sign. Therefore $\sigma_{\xi^*}(D^{\dag})=-\sigma_{\xi}(D)$, and $\sigma_{\xi}(D^{\dag})=-\sigma_{\xi^*}(D)$. 

Since ${\bf or} = {\bf sc} \circ {\bf cc},$ it follows that
$D^{*\dag}$ is obtained by applying {\bf or} to all crossings of $D$.
For $D^{*\dag}$, we find that
$\sigma_{\xi^*}(D^{*\dag})=\sigma_{\xi}(D)$ and $\sigma_{\xi}(D^{*\dag})=\sigma_{\xi^*}(D)$.

The next theorem summarizes these observations. 

\begin{theorem}\label{mirror images}
If $K$ is a virtual knot with checkerboard coloring $\xi$, then the signatures of the inverse and  mirror images of $K$ satisfy:
\begin{eqnarray*}
(\sigma_{\xi}(-K),\sigma_{\xi^*}(-K))&=&(\sigma_{\xi}(K),\sigma_{\xi^*}(K)),\\
(\sigma_{\xi}(K^{*}),\sigma_{\xi^*}(K^{*}))&=&(-\sigma_{\xi}(K),-\sigma_{\xi^*}(K)),\\
(\sigma_{\xi}(K^{\dag}),\sigma_{\xi^*}(K^{\dag}))&=&(-\sigma_{\xi^*}(K),-\sigma_{\xi}(K)),\\
(\sigma_{\xi}(K^{*\dag}),\sigma_{\xi^*}(K^{*\dag}))&=&(\sigma_{\xi^*}(K),\sigma_{\xi}(K)).
\end{eqnarray*}
\end{theorem}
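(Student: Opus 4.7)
The plan is to verify the four identities one at a time by carrying out a crossing-by-crossing analysis. For each operation (orientation reversal, vertical mirror, horizontal mirror, and their composite), I determine the effect on the incidence $\eta(c)$, the writhe $\ep(c)$, and the type of each crossing $c$, and then track how these changes propagate through the Goeritz matrix $G_\xi$ and the correction term $\mu_\xi$, ultimately using $\sigma_\xi(D)=\sig(G_\xi(D))-\mu_\xi(D)$. The work has been largely set up already in Table \ref{effect} and in the remarks preceding the theorem; the proposal is to organize those observations into four clean cases.

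For $-K$, reversing orientation does not alter the under/over information and leaves the checkerboard coloring intact. A direct inspection of the colored crossing picture shows that both $\eta(c)$ and the crossing type are unchanged (each strand reverses, and the two reversals cancel out in the oriented color picture). Therefore $G_\xi$ and $\mu_\xi$ are unchanged for both $\xi$ and $\xi^*$, giving the first identity.

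For $K^*$, we apply \textbf{cc} to every crossing; by Table \ref{effect}, the type is preserved and $\eta(c)$ flips sign. Consequently the off-diagonal entries $g_{ij}=-\sum_{c\in C_{ij}(D)}\eta(c)$ all change sign, so $G_\xi(D^*)=-G_\xi(D)$ and $\sig(G_\xi(D^*))=-\sig(G_\xi(D))$; the same type-II crossings contribute to $\mu$, but with the negated $\eta$, yielding $\mu_\xi(D^*)=-\mu_\xi(D)$. Combining, $\sigma_\xi(K^*)=-\sigma_\xi(K)$, and an identical argument with $\xi^*$ in place of $\xi$ gives $\sigma_{\xi^*}(K^*)=-\sigma_{\xi^*}(K)$.

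For $K^\dag$, I apply \textbf{sc} to every crossing. The main subtlety (and what I expect to be the chief obstacle, though it is conceptual rather than computational) is that reflecting horizontally swaps the roles of the black and white regions, so the natural coloring on $D^\dag$ that matches $\xi$ is $\xi^*$ on $D$, and vice versa. After this identification, Table \ref{effect} tells us that \textbf{sc} flips the type while preserving $\eta$, but once we account for the color swap (which itself flips type and negates $\eta$ by definition), the net effect is that type is preserved while $\eta$ is negated. The computation then parallels the $K^*$ case to yield $\sigma_\xi(K^\dag)=-\sigma_{\xi^*}(K)$ and $\sigma_{\xi^*}(K^\dag)=-\sigma_\xi(K)$. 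Finally, since $\textbf{or}=\textbf{sc}\circ\textbf{cc}$, the diagram $K^{*\dag}$ is obtained by applying \textbf{or} to every crossing, and composing the identities for $K^*$ and $K^\dag$ (or arguing directly from the last row of Table \ref{effect}) gives $\sigma_\xi(K^{*\dag})=\sigma_{\xi^*}(K)$ and $\sigma_{\xi^*}(K^{*\dag})=\sigma_\xi(K)$, completing the proof.
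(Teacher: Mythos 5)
Your proposal is correct and follows essentially the same route as the paper: the orientation-reversal case is handled by noting that incidence, type, and the coloring are all unchanged; the $K^*$ case by negating every incidence (hence the Goeritz matrix and correction term) via the \textbf{cc} row of Table \ref{effect}; the $K^\dag$ case by passing to the dual coloring so that the color swap and the type flip from \textbf{sc} cancel, leaving a net negation of $\eta$ with type preserved; and the $K^{*\dag}$ case by composing (equivalently, using the \textbf{or} row). The paper's own argument is exactly this crossing-by-crossing bookkeeping, stated in the paragraphs preceding the theorem.
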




\section{Khovanov Homology for Classical Knots}\label{two}

In this section, we briefly introduce the Khovanov homology for classical knots and links. For more details see \cite{Turner} and \cite{Bar-natan-02}.

Khovanov homology is a $(1+1)$-TQFT (\emph{topological quantum field theory}\index{topological quantum field theory}), i.e.~it is a functor from the category of compact $1$-dimensional manifolds (a collection of circles) with morphisms, compact and  orientable $2$-dimensional cobordisms (surfaces) between them, into the category of graded vector spaces and graded linear maps.

Khovanov introduced the invariant for classical links in \cite{Khovanov-00}. It is a bigraded homology theory which is defined and computed in a purely combinatorial way. Khovanov homology is a categorification of the Jones polynomial, in that its graded Euler characteristic is equal to the unnormalized Jones polynomial. For a link $L$, we denote its Khovanov homology by $Kh^{*,*}(L)$, and we have
$$\widehat{\chi}(Kh^{*,*}(D))=\sum_{i,j\in \ZZ}(-1)^iq^j\text{dim}Kh^{i,j}(D)=\widehat{V}_L(q).$$

Suppose $D$ is a link diagram with $n_{+}$ positive crossings and $n_{-}$ negative crossings. Let $n=n_{+}+n_{-}$ and enumerate the crossings by $c_1,\ldots,c_n$. With $\QQ$ as the coefficient ring, we set $V=\QQ \oo\oplus\QQ X$ to be the $2$ dimensional vector space with basis $\{\oo,X\}$. Setting the degree of $\oo$ to be $+1$ and the degree of $X$ to be $-1$ gives $V^{\otimes n}$ the structure of a graded vector space. This grading will be denoted $j$ and called \emph{vertical}\index{vertical grading} or \emph{quantum grading}\index{quantum grading}. 

If $W=\bigoplus_{m\in \ZZ}W_{m}$ is a graded vector space, then a \emph{vertical grading shift}\index{vertical grading shift} of $W$ by $\ell$ is defined as $W\{\ell\}=\bigoplus_{m\in\ZZ}W_{m}'$, where $W_{m}'=W_{m-\ell}$.  

We consider the cube of resolutions of $D$, which is an $n$-dimensional cube with $2^n$ vertices, one for each state. Here we denote states by $\alpha\in\{0,1\}^n$, which is a binary sequence of length $n$ that indicates how each crossing has been resolved.
Let $r_{\alpha}$ and $k_{\alpha}$ be the number of $\oo$'s and cycles in $\alpha$, respectively. Let $\mathcal{C} ^{i,*}(D)$ be $\bigoplus V^{\otimes k_{\alpha}}\{r_{\alpha}+n_{+}-2n_{-}\}$, where we take the direct sum over all the states $\alpha$ with $r_{\alpha}=i+n_{-}$. Here $i$ is called \emph{horizontal}\index{horizontal grading} or  \emph{homological grading}\index{homological grading}.    

If $\mathcal{C}(D)=\bigoplus_{i}\mathcal{C}^i(D)$, then a \emph{horizontal grading shift}\index{horizontal grading shift} of $\mathcal{C}(D)$ by $l$ is defined as $\mathcal{C}(D)[l]=\bigoplus_{i}\mathcal{C}'^i(D)$, where $\mathcal{C}'^i(D)=\mathcal{C}^{i-l}(D)$.

We define the Khovanov complex as $\mathcal{C}Kh(D)=\bigoplus_{i,j}\mathcal{C}^{i,j}(D)$. To define the differential $d$, we introduce the product and coproduct maps. Note that henceforth we will suppress the symbol $\otimes$ in writing elements of $V^{\otimes k}$.
\begin{eqnarray*}
\Delta:V\to V\otimes V,&\ \ \ &m:V\otimes V\to V.\\
\oo\mapsto \oo X+X\oo&\ \ \ &\oo\oo\mapsto\oo     \\
X\mapsto XX&\ \ \ & \oo X\mapsto X  \\
&\ \ \ &     X\oo \mapsto X\\
&\ \ \ &    XX\mapsto 0
\end{eqnarray*}

We only define a map from a state $\alpha$ to a state $\alpha'$ if $\alpha'$ obtained from $\alpha$ by changing one $0$ to $1$. In that case, either two cycles of $\alpha$ merge into one cycle of $\alpha'$, or one cycle of $\alpha$ splits into two cycles of $\alpha'$. In the first case, we use the product map $m$, and in the second, we use the coproduct map $\Delta$. For all other cycles of $\alpha$, we apply the identity. In order to write down all the maps, we fix once and for all an enumeration of the cycles in each state, and these are not changed throughout the calculations. 

The last step is to assign negative signs to some of the maps. There are many ways to do that, but the homology groups for the different choices of signs
are all isomorphic. Here we follow the sign convention of \cite{Bar-natan-02}. 

Suppose we change $0$ to $1$ in the $m$-th spot to obtain $\alpha'$ from $\alpha$. In $\alpha$, we count how many $1$'s we have before the $m$-th spot. If it is an odd number, we assign a negative sign to the associated map. 

For a fixed $j$ the map $d^2:\mathcal{C}^{i,j}\to \mathcal{C}^{i+2,j}$ is zero and we obtain a bigraded homology theory denoted by $Kh^{*,*}(D)$. 

In \cite{Lee}, Lee constructs a new complex by modifying the maps $\Delta$ and $m$: 
\begin{eqnarray*}
\Delta':V\to V\otimes V,&\ \ \ &m':V\otimes V\to V.\\
\oo\mapsto \oo X+X\oo&\ \ \ &\oo\oo\mapsto\oo     \\
X\mapsto \oo\oo+XX&\ \ \ & \oo X\mapsto X  \\
&\ \ \ &     X\oo \mapsto X\\
&\ \ \ &    XX\mapsto \oo
\end{eqnarray*}

This results in a new homology theory called Lee homology and denoted $\text{Lee}(D)$. Notice the maps no longer preserve the quantum degree, thus Lee homology is only graded rather being bigraded. 

It turns out that $\text{Lee}(K)\cong \QQ\oplus\QQ$ for all knots, nevertheless as we will see the Lee homology contains a nontrivial and powerful invariant $s(K)$ called the \emph{Rasmussen invariant}\index{Rasmussen invariant}. This invariant was introduced by Rasmussen in \cite{Rasmussen}, and we briefly recall its definition. 

The quantum degree defines a decreasing filtration on $\mathcal{C}Kh(K)$. This induces a filtration on $\text{Lee}(K)$, 
$$H_{*}(\mathcal{C})=\mathcal{F}^nH_{*}(\mathcal{C})\supset \mathcal{F}^{n+1}H_{*}(\mathcal{C})\supset\ldots\supset \mathcal{F}^mH_{*}(\mathcal{C}).$$
For $x\in \text{Lee}(K)$, let $s(x)$ be the filtration degree of $x$, i.e.~$s(x)=k$ if $x\in\mathcal{F}^kH_{*}(\mathcal{C})$ but $x$ does not belong to $\mathcal{F}^{k+1}H_{*}(\mathcal{C})$.  We define 
\begin{eqnarray*}
s_{min}(K)&=&\min\{s(x)\in\text{Lee}(K)\mid x\neq0 \}, \\
s_{max}(K)&=&\max\{s(x)\in\text{Lee}(K)\mid x\neq0 \}.
\end{eqnarray*} 

Rasmussen proves that $s_{max}(K) = s_{min}(K)+2$ for all knots, and the \emph{Rasmussen invariant} is defined to be $s(K)=s_{min}(K)+1=s_{max}(K)-1$.

For a link $L$, the filtration on $\mathcal{C}Kh(L)$ induces a spectral sequence with $E_{0}$ term the Khovanov complex and $d_0=d_{Kh}$. It follows that the $E_1$ term is $Kh^{*,*}(L)$. For every $m$, $d_m=0$ unless $m$ is a multiple of $4$. As a result, for any $m\geq 0$, $E_{4m+1}\cong E_{4m+2}\cong E_{4m+3}\cong E_{4m+4}$. The $E_{\infty}$ page is isomorphic to the Lee homology. For a knot $K$, it has two copies of $\QQ$ which are placed on the $y$-axis. Their location indicates the filtration degree of the generators of the Lee homology. In particular the average of their $y$-coordinates is equal to $s(K)$. 

In \cite{Lee}, Lee proves that, for any alternating link $L$, its Khovanov homology  $Kh^{*,*}(L)$ is supported in the two lines $j=2i-\sigma(L)\pm1$. As a result, in the spectral sequence $d_{m}=0$ for every $m\geq 5$ and $E_{\infty}=E_5$. If $K$ is an alternating knot, then the $y$-coordinates of the two surviving copies of $\QQ$ are $-\sigma(K)\pm 1$. This implies that $s(K)=-\sigma(K)$.  

Lee homology is a functor, and if we have a cobordism $S$ between two links $L_0$ and $L_{1}$, then $S$ induces a map $\varphi_{S}':\text{Lee}(L_0)\to \text{Lee}(L_1)$ with filtration degree equal to $\chi(S)$. We will describe the map $\varphi'_S$ in a moment, but first notice that this implies that if $K$ is a knot, then $|s(K)|\leq 2g_{4}(K)$, where $g_{4}(K)$ denotes the classical 4-ball genus. The same inequality holds for the knot signature $\sigma(K)$, and Lee's theorem tells us that, for alternating knots, the Rasmussen invariant $s(K)$ gives the same bound on the $4$-ball genus as the knot signature. However, for non-alternating knots, it is no longer true that $s(K) = -\sigma(K)$, and sometimes the Rasmussen invariant provides a better bound. It should further be noted that Rasmussen's invariant gives a lower bound on the smooth 4-ball genus, whereas the knot signature gives a bound on the topological 4-ball genus.

\begin{example}
Let $K$ be the classical knot $9_{42}$. (For classical knots, we adopt the notation of  \cite{Knotinfo}.) Then this knot has Rasmussen invariant
 $s(K)=0$ and signature $\sigma(K)=2$. Thus the signature provides a better bound on the 4-ball genus than the Rasmussen invariant for this knot. On the other hand, for the knot $K=10_{132}$, we find that it has Rasmussen invariant $s(K)=-2$ and signature $\sigma(K)=0$. Thus, we see that the Rasmussen invariant gives a better bound on the $4$-ball genus in this case. 
\end{example}

We now describe the map $\varphi_{S}'$. Since any cobordism decomposes into a sequence of elementary cobordisms, it suffices to define $\varphi'_S$ for births, deaths, and saddles. In doing that, we will use the maps $\iota:\QQ\to V$ ($1\mapsto\oo$) and $\varepsilon:V\to\QQ$ ($\oo\mapsto 0$ and $X\mapsto 1$).

Note that an elementary cobordism is either a birth, a death, or a saddle. For a birth, we set $\varphi_{S}'=\iota$. For a death, we set $\varphi_{S}'=\varepsilon$. For a saddle $S$, $\varphi_{S}'$ is either $m'$ or $\Delta'$.



In general, the Rasmussen invariant is difficult to compute. However, the calculation simplifies for positive (or negative) knots, as we now explain.  
\begin{definition}
A link is called \emph{positive}\index{positive link} if it admits a diagram with only positive crossings. Similarly, a link is \emph{negative}\index{negative link} if it admits a diagram with only negative crossings. 
\end{definition}

If $K$ is a positive knot with diagram $D$ with $n$ positive crossings, then the Rasmussen invariant is given by $$s(K)=-k+n+1,$$
where $k$ is the number of cycles in the all $0$-smoothing state of $D$ \cite{Rasmussen}.

If $K$ is a negative knot with diagram $D$ with $n$ negative crossings, then the 
mirror image $D^{*}$ has $n$ positive crossings, and the all $0$-smoothing state of $D^{*}$ is the all $1$-smoothing state of $D$. Since $s(K^{*})=-k+n+1$, and since the Rasmussen invariant satisfies $s(K^{*})=-s(K)$ under taking mirror images, it follows that $s(K)=k-n-1$.

Next, we recall the definition of the Turaev genus for classical links from \cite{Tur}.
Let $D$ be a connected classical link diagram with $c(D)$ crossings, and suppose $s_0$ and $s_1$ are the all $0$ and all $1$ smoothing states, respectively. 

\begin{definition} \label{def-Turaev}
The \emph{Turaev genus} of the link diagram $D$ is defined by setting
$$g_{T}(D)=\frac{1}{2}(c(D)+2-|s_0(D)|-|s_1(D)|).$$
For a non-split classical link $L$, the Turaev genus, denoted $g_T(L)$, is defined to be
the minimal of $g_T(D)$ over all connected classical link diagrams $D$ for $L$.
\end{definition}

 For more on Turaev genus, see \cite{SurTur}.

\section{Khovanov Homology for Virtual Knots}\label{three}

In this section, we briefly introduce the Khovanov homology for virtual knots and links.

When one attempts to define a Khovanov theory for virtual knots the major problem is the presence of the \emph{single cycle smoothing}\index{single cycle smoothing} (see Figure \ref{single-cycle}). We need to assign a map to a single cycle smoothing, which we can do by assigning the zero map. In classical Khovanov theory, the signs of maps are chosen in a way to make each face of the cube of resolutions to be anti-commutative. Then this fact enables us to  define a differential $d$ satisfying $d^2=0$. For virtual knots, the existence of single cycle smoothings makes it more difficult to assign signs.

\vspace{.3cm} 
\begin{figure}[h!]
\centering\includegraphics[height=28mm]{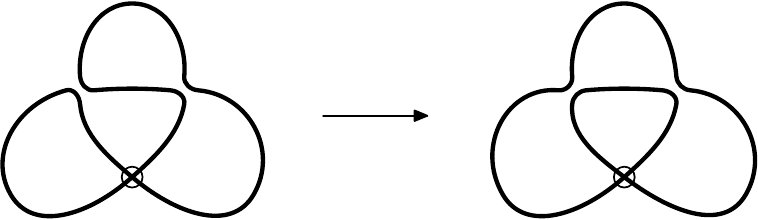}
\caption{A single cycle smoothing.} \label{single-cycle}
\end{figure}
\vspace{.3cm}

Tubbenhauer in \cite{Tubbenhauer}, used un-oriented TQFT's to define a Khovanov homology for virtual knots and links. 
In what follows, we describe Tubbenhauer's method. We will cover the combinatorial definitions. For a discussion about un-oriented TQFTs, see \cite{Tubbenhauer}.

Let $\QQ$ be the coefficient ring and $V=\QQ \oo\oplus \QQ X$. 
Start with a virtual link diagram $D$ with $n$ classical crossings. Resolve all the crossings in both ways to obtain $2^n$ states, leaving virtual crossings untouched. The Khovanov chain complex $\mathcal{C}(D)$ is defined as before, i.e.~we assign  $V^{\otimes k}$ to a state with $k$ components. The degree of each element and the grading shifts are defined as before. Whenever two vertices of an edge in the cube of resolutions have the same number of states, then that indicates the presence of a single cycle smoothing. In that case, we assign the zero
map to the edge. It remains to define the joining and splitting maps and the signs. 

Choose orientations for the cycles of each state. Although we can do this in an arbitrary way, to have less complicated maps at the end, we use a spanning tree argument. Choose a spanning tree for the cube of the resolution and start with the rightmost vertices and choose orientations for the cycles of corresponding states. Now remove those vertices and again choose orientations for the rightmost vertices, in a compatible way. That means we compare the two vertices which are joined by an edge of the spanning tree, and orient the cycles of the left vertex as follows. For cycles which are not involved in the join, split or the single cycle smoothing, orient each cycle of the left vertex exactly like the corresponding cycle in the right vertex. For other cycles try to orient them in a way to have the most compatibility. 

Choose an $x$-marker for each crossing and the corresponding $0$- and $1$-smoothings, as in Figure \ref{x-marker}. We choose either $x$ or $x'$ and notice that up to rotating the diagram and the corresponding states, there are only these two ways to assign the $x$-markers.

\vspace{.3cm}
\begin{figure}[h!]
\centering\includegraphics[height=26mm]{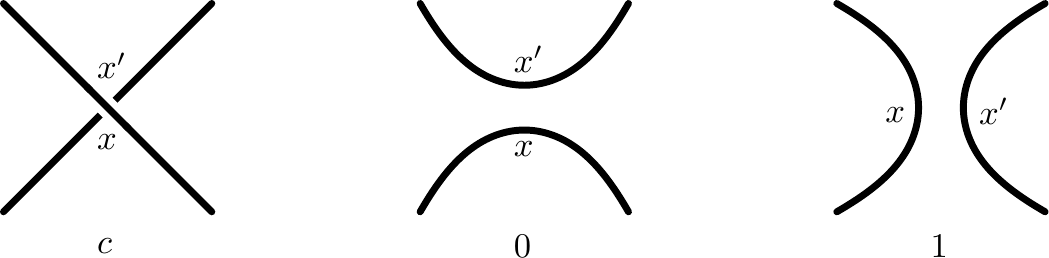}
\caption{An $x$-marker for a crossing and the corresponding smoothings.} \label{x-marker}
\end{figure}
\vspace{.3cm}

We define the sign of the non-zero maps as follows. By a spanning tree argument, number the cycles of each state. Suppose we have a joining map from a state $s$ to another state $s'$, and suppose $s$ has $m+1$ cycles. Let the joining map, merges the cycles number $a$ and $b$ in $s$ and the resulting cycle in $s'$ has number $c$, and let the cycle number $a$ has the $x$-marker. In an (m+1)-tuple, put $a$ first, then $b$ and then place the remaining numbers in an ascending order. Let $\tau_{1}$ be the permutation which takes $(1,2,\cdots,m+1)$ to this $(m+1)$-tuple. Now in an $m$-tuple, put $c$ first, and place the remaining numbers in an ascending order, and let $\tau_{2}$ be the permutation which takes $(1,2,\cdots,m)$ to this $m$-tuple. Define the sign of the joining map to be $\text{sign}(\tau_1)\text{sign}(\tau_2)$. The sign of the splitting map is defined similarly.

Next we define the maps. They are defined between the two vertices of an edge of the cube of resolutions. For each edge the smoothing of only one of the crossings is different, and we define a map from the state with $0$-smoothing to the state with $1$-smoothing. If a  cycle of the source state is disjoint from the smoothing change, assign the identity map to it, if its orientation agrees with the orientation of the corresponding cycle in the target state, otherwise assign negative of the identity map.

At a small neighborhood of the crossing, there are two parallel strings in each state. Notice that each cycle is oriented. Now if the map  looks like $\downarrow\uparrow\ \to\ \rightleftarrows$, we decorate the four strings in the source and target state with a $+$ sign, and we call this decoration \emph{standard}. We always rotate the states so the two  strings in the source state are vertical, and the two strings in the target state are horizontal. Then we compare the orientation of each string with the orientation of the corresponding one in the standard decoration, if they agree, decorate that string with a $+$ sign, otherwise decorate it with a $-$ sign. We record the possible cases in  Table \ref{decoration}.

\vspace{.3cm}
\begin{table}[h!]
\begin{center}
{\rowcolors{1}{white!80!white!50}{lightgray!70!lightgray!40}
\begin{tabular}{|c|c|c|c|}  \hline
 string & splitting map & string & joining map \\ \hline \hline 
$\downarrow\uparrow\ \to\ \rightleftarrows$   & $\Delta_{++}^{+}$ & $\downarrow\uparrow\ \to\ \rightleftarrows$ &  $m_{+}^{++}$  \\ \hline 
$\downarrow\uparrow\ \to\ \rightrightarrows$   & $\Delta_{-+}^{+}$ & $\uparrow\uparrow\ \to\ \rightleftarrows$ & $m_{+}^{-+}$    \\ \hline
$\downarrow\uparrow\ \to\ \leftleftarrows$   & $\Delta_{+-}^{+}$ & $\downarrow\downarrow\ \to\ \rightleftarrows$ & $m_{+}^{+-}$    \\ \hline
$\downarrow\uparrow\ \to\ \leftrightarrows$   & $\Delta_{--}^{+}$ & $\uparrow\downarrow\ \to\ \rightleftarrows$ & $m_{+}^{--}$    \\ \hline
$\uparrow\downarrow\ \to\ \rightleftarrows$   & $\Delta_{++}^{-}$ & $\downarrow\uparrow\ \to\ \leftrightarrows$ & $m_{-}^{++}$    \\ \hline
$\uparrow\downarrow\ \to\ \rightrightarrows$   & $\Delta_{-+}^{-}$ & $\uparrow\uparrow\ \to\ \leftrightarrows$ & $m_{-}^{-+}$    \\ \hline
$\uparrow\downarrow\ \to\ \leftleftarrows$   & $\Delta_{+-}^{-}$ & $\downarrow\downarrow\ \to\ \leftrightarrows$ & $m_{-}^{+-}$    \\ \hline
$\uparrow\downarrow\ \to\ \leftrightarrows$   & $\Delta_{--}^{-}$ & $\uparrow\downarrow\ \to\ \leftrightarrows$ & $m_{-}^{--}$    \\     
\hline 
\end{tabular}
}  
\end{center}
\caption{String decoration and corresponding maps.}\label{decoration}
\end{table}
\vspace{.3cm}

Other cases occur only when we have a single cycle smoothing. We describe the map $\Delta_{bc}^{a}(v)$ as follows. Multiply $v$ by $a$, apply $\Delta$, then multiply the first component of the resulting tensor product by $b$ and the second component by $c$. Notice that the first component of the tensor product, corresponds to the lower string or the string with the $x$-marker on it. Similarly, we define the map $m_{a}^{bc}$. First multiply the first component of the tensor product by $b$ and the second component by $c$, then apply $m$, at the end multiply the result by $a$. 

\begin{remark}
We know that every checkerboard colorable diagram admits a source-sink orientation (\cite[Proposition 6]{Kamada-skein}). We can use this orientation to make all the decorations standard. In that case we only need the maps $\Delta_{++}^{+}$ and $m_{+}^{++}$. 
\end{remark}

\begin{example}\label{ex37}
We compute the Khovanov homology for the virtual knot $K=3.7$; here the decimal number refers to the virtual knots in Green's tabulation \cite{Green}.  Figure \ref{3-7} is a diagram for $K$ and Figure \ref{resolution} is the cube of resolutions. 

\vspace{.3cm}
\begin{figure}[h!]
\centering\includegraphics[height=32mm]{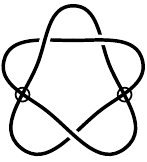}
\caption{The virtual knot $3.7$.} \label{3-7}
\end{figure}
\vspace{.3cm}

 We enumerate the components of a state in a way that the one which has more $x$-markers in it be the first component. All the $m$ maps are $m_{+}^{++}$, and $\Delta$ maps are $\Delta_{++}^{+}$. A red arrow means the associated map has negative sign. All the maps are a single  splitting or joining map except for the state which has 3 components in it. For this state the incoming map is $\Delta\otimes id$, and for the outgoing maps, the upper one is $\varphi$ defined as $\varphi(a,b,c)=-m(a,c)\otimes b$, and the lower one is $-id\otimes m$.

\vspace{.3cm}
\begin{figure}[h!]
\centering\includegraphics[height=94mm]{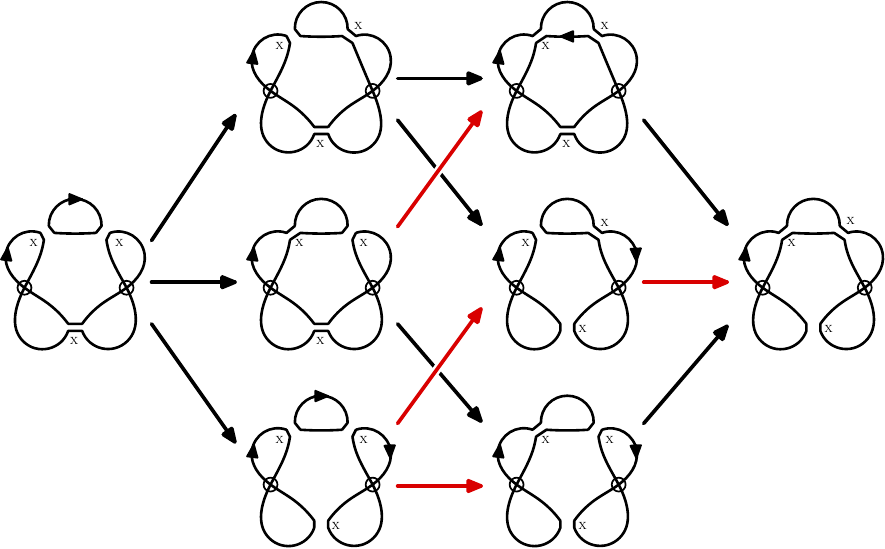}
\caption{The cube of resolutions for $K=3.7$.} \label{resolution}
\end{figure}
\vspace{.3cm}

The Khovanov complex is as follows:
$$V\otimes V\{-3\}\to V\oplus V\oplus \left(V^{\otimes 3}\right)\{-2\}\to
\left(V^{\otimes 2}\right)\oplus\left(V^{\otimes 2}\right)\oplus\left(V^{\otimes 2}\right)\{-1\}
\to V.$$

We record the basis elements of the chain complex in  Table \ref{basis elements}.

\vspace{.3cm}
\begin{table}[h!]
\small
\begin{center}
{\rowcolors{1}{white!80!white!50}{lightgray!70!lightgray!40}
\begin{tabular}{|c|c|c|c|c|}  \hline
$j\setminus i$ & $-2$ & $-1$ &0&1 \\ \hline \hline 
   &  &    & $(\oo\oo,0,0)$ &  \\ 
   1          &&$(0,0,\oo\oo\oo)$& $(0,\oo\oo,0)$ &                 $\oo$                  \\  
             &&&  $(0,0,\oo\oo)$ &                                   \\    \hline
  &  & $(\oo ,0,0)$ & $(\oo X,0,0)$ &  \\ 
  &  & $(0,\oo,0)$ & $(X\oo,0,0)$ &  \\ 
 $-1$  & $\oo\oo$ & $(0,0,\oo\oo X)$ & $(0,\oo X,0)$ & $X$ \\ 
    &  & $(0,0,\oo X\oo)$ &$(0,X\oo,0)$  &  \\ 
     &  &$(0,0,X\oo\oo)$  &$(0,0,\oo X)$  &  \\ 
      &  &  & $(0,0,X\oo)$ &  \\ 
 \hline
&  & $(X,0,0)$ &  &  \\ 
& $\oo X$ & $(0,X,0)$ &$(XX,0,0)$  &  \\
$-3$&  &$(0,0,\oo XX)$  &$(0,XX,0)$  &  \\
&$X\oo $  & $(0,0,X\oo X)$ & $(0,0,XX)$ &  \\
&  & $(0,0,XX\oo)$ &  &  \\ 
 \hline
 $-5$ & $XX$ & $(0,0,XXX)$ &  & \\
\hline
\end{tabular}
}  
\end{center}
\caption{The basis elements for the chain complex.}\label{basis elements}
\end{table}
\vspace{.3cm}

The image of each basis element is in Table \ref{image of the basis elements}.

\vspace{.3cm}
\begin{table}[h!]
\small
\begin{center}
{\rowcolors{1}{white!80!white!50}{lightgray!70!lightgray!40}
\begin{tabular}{|c|c|c|c|c|}  \hline
$j\setminus i$  & $-2$ & $-1$ &0&1 \\ \hline \hline 
   &  &    & $ \oo$ &  \\ 
   1          &&$(0,-\oo\oo,-\oo\oo)$& $ -\oo$ &                                $ 0$   \\  
             &&&  $ \oo$ &                                   \\    \hline
  &  & $(\oo X+X\oo,\oo X+X\oo,0)$ & $ X$ &  \\ 
  &  & $ (-\oo X-X\oo,0,\oo X+X\oo)$ & $ X$ &  \\ 
 $-1$  & $(\oo,\oo,\oo X\oo+X\oo\oo)$ & $ (0,-X\oo,-\oo X)$ & $ -X$ & $ 0$ \\ 
    &  & $ (0,-\oo X,-\oo X)$ &$ -X$  &  \\ 
     &  &$ (0,-X\oo,-X\oo)$  &$ X$  &  \\ 
      &  &  & $ X$ &  \\ 
 \hline
&  & $ (XX,XX,0)$ &  &  \\ 
& $ (X,X,\oo XX+X\oo X)$ & $ (-XX,0,XX)$ &$ 0$  &  \\
$-3$&  &$ (0,-XX,0)$  &$ 0$  &  \\
&$(X,X,XX\oo)$  & $ (0,0,-XX)$ & $ 0$ &  \\
&  & $ (0,-XX,-XX)$ &  &  \\ 
 \hline
 $-5$ & $ (0,0,XXX)$ & $ (0,0,0)$ &  & \\
\hline 
\end{tabular}
}  
\end{center}
\caption{The image of the basis elements.}\label{image of the basis elements}
\end{table}
\vspace{.3cm}

It is easy to check $d^2=0$. When we take the homology, two copies of $\QQ$ survive, both in homological degree $0$, one in quantum degree $1$ and the other in quantum degree $-1$. Therefore the Khovanov homology of $K$ is isomorphic to the Khovanov homology of the unknot.  
\end{example}  

In \cite{DKK}, Dye, Kaestner and Kauffman define Lee homology and the Rasmussen invariant for virtual knots, and they show that the Rasmussen invariant is an invariant of virtual knot concordance.

\begin{remark}
Khovanov homology and the Rasmussen invariant  are invariants of unoriented virtual knots. If $D$ is a virtual knot diagram, then under mirror symmetry, we have
$$Kh^{i,j}(D^{*})=Kh^{i,j}(D^{\dag})=Kh^{-i,-j}(D)\quad \text{ and } \quad s(D^{*}) = s(D^{\dag}) = -s(D).$$
The statement about $D^\dag$ follows from the one about $D^*$ since $D^\dag$ is obtained by applying {\bf or} to all the crossings in $D^*$. More generally, applying {\bf or} to a crossing does not change the cube of resolutions (see \cite{DKK}), nor does it change any of the quantities needed to compute the Khovanov homology, such as the number of positive and negative crossings. Hence the Khovanov homology and the Rasmussen invariant are unchanged under the {\bf or} move. 
\end{remark}

\begin{remark}
Connected sum is not a well-defined operation on virtual knots; it depends on the diagrams used and the choice of where to form the connected sum. The Rasmussen invariant is independent of these choices, and it is, in fact,
additive under connected sum. For a proof, we refer the reader to  \cite{Rushworth}. This fact and invariance under concordance imply that it induces a homomorphism from the virtual concordance group to $\ZZ$.
\end{remark}

\begin{example} Table \ref{ras-table} lists the Rasmussen invariant for the alternating virtual knots up to six crossings. The three virtual knots 6.90115, 6.90150 and 6.90170 appearing in Figure \ref{fig-6-90170} all have Rasmussen invariant equal to $-2$, and as a result we conclude that none of these virtual knots are slice. 

In \cite{BCG17}, Boden et al.~define slice obstructions in terms of signatures of symmetrized Seifert matrices for almost classical knots, and as an application, they show that neither 6.90115 nor 6.90150 are slice. The Rasmussen invariant provides an alternative proof of non-sliceness for these two almost classical knots, and it also gives a new result by showing that 6.90170 is not slice.

Since each of these knots can be unknotted using two crossing changes, it follows that their slice genus satisfies $1 \leq g_{s}(K) \leq  2$.
\end{example} 

\begin{figure}[h!]
\includegraphics[scale=1.6]{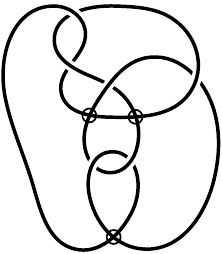} \hspace{1.2cm} 
\includegraphics[scale=1.45]{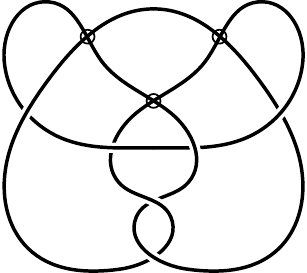} \hspace{1.2cm} 
\centering\includegraphics[scale=1.6]{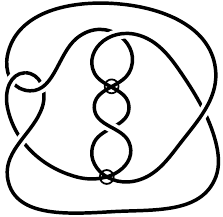}
\caption{The alternating virtual knots $6.90115,$ $6.90150$ and $6.90170$, from left to right.} \label{fig-6-90170}
\end{figure}
\vspace{.3cm}


\section{Khovanov Homology and Alternating Virtual Links}\label{four}

Let $D$ be a checkerboard virtual link diagram. Apply {\bf or} to all crossings with $\eta=-1$. The result is a diagram in which $\eta = +1$ for each crossing. Hence, by  Lemma \ref{eta}, the new diagram, which we call $D_{alt}$, is alternating. The diagrams $D$ and $D_{alt}$ have isomorphic Khovanov homology groups.

In particular, starting with any classical diagram, we can change it to an alternating virtual diagram with the same Khovanov homology. We can do the same, starting with any checkerboard colorable diagram.

Following  \cite{Lee}, we seek a relation between the Rasmussen invariant and the signatures of alternating virtual knots. If $i$ is the homological degree and $j$ is the quantum degree for Khovanov homology, then $H$-thinness for classical alternating knots means, $j=2i-\sigma\pm 1$, where $\sigma$ is the signature. This implies that $s=-\sigma$, where $s$ is the Rasmussen invariant. 
On the other hand, not all virtual alternating knots are $H$-thin. For example the Khovanov polynomial for the knot $K=5.2426$ depicted in Figure \ref{fig-5-2426} is as follows: 
$$\frac{1}{q^{11}t^{3}}+\frac{1}{q^{9}t^{3}}+\frac{1}{q^{7}t^{2}}+\frac{1}{q^{5}t^{2}}+\frac{1}{q^{5}}+\frac{1}{q^{3}},$$ which is supported in three lines $j=2i-1$, $j=2i-3$ and $j=2i-5$. Notice that from Table \ref{ras-table} $(\sigma_{\xi^{*}},\sigma_{\xi})=(2,4),$ and we can write the three lines as:
$$j=2i-\sigma_{\xi^*}+1,j=2i-\sigma_{\xi^*}-1,j=2i-\sigma_{\xi}-1.$$

In fact, instead of $H$-thinness we have the following theorem. Here the coloring $\xi$ is the one for which every  crossing of $D$ has $\eta=-1$.

\begin{theorem}\label{prop}
If $D$ is a connected alternating virtual link diagram with genus $g$ and signatures $\sigma_{\xi},\sigma_{\xi^*}$, then its Khovanov homology is supported in $g+2$ lines: $$j=2i-\sigma_{\xi^*}+1,j=2i-\sigma_{\xi^*}-1,\ldots,j=2i-\sigma_{\xi}-1.$$
\end{theorem}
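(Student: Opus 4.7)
The plan is to prove the statement by induction on the number $n$ of classical crossings of $D$, following the strategy Lee used for classical alternating links in \cite{Lee}. In the base case $n = 0$, connectedness forces $D$ to be a single unknotted curve on a genus-zero surface, so $g = 0$, both signatures vanish, and $Kh^{i,j}(D)$ is nonzero only at $(i,j) = (0, \pm 1)$, which lies on the two claimed diagonals $j = 2i \pm 1$.

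For the inductive step, I would pick a classical crossing $c$ of $D$ and consider its two smoothings $D_0$ and $D_1$. Because smoothing preserves the checkerboard coloring and does not alter the incidence of any remaining crossing, Lemma \ref{eta} ensures that $D_0$ and $D_1$ are again alternating checkerboard virtual link diagrams; if either becomes disconnected, I would handle its connected components separately. The Khovanov chain complexes assemble into a short exact sequence with grading shifts determined by the writhe of $c$, and its associated long exact sequence
$$\cdots \to Kh^{i-1,j-\alpha}(D_1) \to Kh^{i,j}(D) \to Kh^{i,j-\beta}(D_0) \to Kh^{i,j-\alpha}(D_1) \to \cdots$$
transfers information about the supports of $Kh(D_0)$ and $Kh(D_1)$, which are controlled by the inductive hypothesis, to a bound on the support of $Kh(D)$.

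The main technical task, and the main obstacle, is a careful bookkeeping of how the supporting genus $g$ and the pair of signatures $(\sigma_\xi, \sigma_{\xi^*})$ change as we pass from $D$ to $D_0$ or $D_1$. In the classical case there is only one signature and $g$ is always zero, which reduces the analysis to a short case distinction on the crossing type and the connectedness of the smoothings. In the virtual case I would argue that smoothing $c$ either contracts an edge in the white Goeritz graph or in its black dual, changing the rank and the correction term $\mu_\xi$ in controlled ways, and simultaneously alters the numbers of circles in the all-$0$ and all-$1$ smoothing states in a way controlled by Lemma \ref{boundary lemma}, so that $g_0, g_1$ and the signatures of $D_0, D_1$ satisfy identities compatible with the relation $\sigma_\xi - \sigma_{\xi^*} = 2g$ from Theorem 5.19 of \cite{Karimi}. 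The hard part is to verify that, after the degree shifts dictated by the long exact sequence, the $g_0 + 2$ diagonals supporting $Kh(D_0)$ and the $g_1 + 2$ diagonals supporting $Kh(D_1)$ interleave to cover exactly the $g + 2$ diagonals $j = 2i - \sigma_{\xi^*} + 1, \ldots, j = 2i - \sigma_\xi - 1$ claimed for $D$, with no leakage onto further diagonals. Once this combinatorial matching is nailed down case by case, the support bound for $Kh(D)$ follows by a routine diagram chase in the long exact sequence.
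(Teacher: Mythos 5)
Your overall strategy---induction on the number of classical crossings, smoothing a chosen crossing to obtain two alternating diagrams, and feeding the inductive hypothesis through the unoriented skein long exact sequence---is exactly the strategy of the paper. But the proposal stops short of the one step that constitutes the actual content of the proof: you explicitly defer ``the main technical task'' of showing that the shifted diagonals supporting $Kh(D_0)$ and $Kh(D_1)$ interleave to give exactly the $g+2$ diagonals claimed for $D$, calling it ``the hard part'' to be ``nailed down case by case.'' That case analysis \emph{is} the theorem; without it the argument is a plan rather than a proof, and nothing in the proposal guarantees that the matching works out (for instance, that no extra diagonal appears when $g(D_0)$ or $g(D_1)$ drops below $g(D)$).

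Moreover, the route you propose for filling the gap---tracking how the Goeritz matrix, its signature, and the correction term $\mu_\xi$ change when a crossing is smoothed---is harder than necessary and is not what the paper does. The paper's key move is to translate the statement out of the language of signatures \emph{before} starting the induction: after normalizing the complex by the shift $[n_-]\{2n_- - n_+\}$, the claimed diagonals become the lines with $y$-intercepts $-|s_{\partial}|+2,\,-|s_{\partial}|,\,\ldots,\,-|s_{\partial}|-2g$, where $s_{\partial}$ is the all-$0$ state (this uses the identity $\sigma_{\xi^*}=|s_\partial|-1-n_+$ for alternating diagrams from \cite{Karimi}). In that formulation the only quantities to track under smoothing are $|s_{\partial}|$, $|\wbar{s}_{\partial}|$ and the genus, and these are completely controlled by Lemma \ref{boundary lemma} together with the observations that the all-$0$ state of $D$ coincides with that of $D(*0)$ and the all-$1$ state of $D$ coincides with that of $D(*1)$. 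Since $|s_{\partial}|+|\wbar{s}_{\partial}|=n+2-2g$ and the genus is an integer, each of $|\wbar{s}_{\partial}(D(*0))|$ and $|s_{\partial}(D(*1))|$ differs from the corresponding count for $D$ by exactly $\pm 1$, producing four cases, each a one-line check of $y$-intercepts. By contrast, controlling $\sig(G_\xi)$ and $\mu_\xi$ directly under an edge contraction in the Goeritz graph would require additional lemmas about how the signature of a symmetric integer matrix jumps, which the paper never needs. I recommend you adopt the reformulation in terms of $|s_{\partial}|$ and then carry out the four-case circle count explicitly.
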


\begin{proof}
Following \cite{Lee}, we apply induction on the number of crossings. The base case is trivial. Let $D$ be an alternating virtual link diagram with $n$ crossings. $0$ and $1$ smooth the last crossing to obtain $D(*0)$ and $D(*1)$, respectively. We can easily see that they are alternating link diagrams. Shift the Khovanov complex by $n_{-}$ horizontally, and $2n_{-}-n_{+}$ vertically. Denote the resulting complex by $\wbar{C}(D)$ and its homology by $\wbar{H}(D)$. We denote this shift by $\wbar{C}(D)=C(D)[n_{-}]\{2n_{-}-n_{+}\}$. We have the following short exact sequence: 
$$0\to \wbar{C}(D(*1))[+1]\{+1\}\to \wbar{C}(D)\to \wbar{C}(D(*0))\to 0,$$  which gives a long exact sequence involving $\wbar{H}(D), \wbar{H}(D(*0))$ and
$\wbar{H}(D(*1))[+1]\{+1\}$, which encodes $\wbar{H}(D)$ is supported on $\wbar{H}(D(*0))$ and $\wbar{H}(D(*1))$. 

It suffices to show that $\wbar{H}(D)$ is supported in $g+2$ lines with $y$-intercepts of $$-|s_{\partial}|+2,-|s_{\partial}|,\cdots,-|s_{\partial}|-2g,$$ because after shifting back $\wbar{H}(D)$, the result follows. 

The all $0$ state of $D$ is the same as the all $0$ state of $D(*0)$. Also the all $1$ state of $D$ is the same as the all $1$ state of $D(*1)$. In the all $0$ state of $D$, if we change the resolution of the last crossing from a $0$-smoothing to a $1$-smoothing, we obtain the all $0$ state for $D(*1)$. Similarly, in the all $1$ state of $D$, if we change the resolution of the last crossing from a $1$-smoothing to a $0$-smoothing, we obtain  the all $1$ state for $D(*0)$. 

These three diagrams, all have the boundary property. $D(*0)$ and $D(*1)$, both have $n-1$ crossings. Thus we have: 
\begin{eqnarray*}
|s_{\partial}(D)|+|\wbar{s}_{\partial}(D)|&=&n+2-2g(D),\\
|s_{\partial}(D(*0))|+|\wbar{s}_{\partial}(D(*0))|&=&n+1-2g(D(*0)),\\
|s_{\partial}(D(*1))|+|\wbar{s}_{\partial}(D(*1))|&=&n+1-2g(D(*1)).
\end{eqnarray*}
Using the above observations, we can rewrite the last two equations as: 
\begin{eqnarray*}
|s_{\partial}(D)|+|\wbar{s}_{\partial}(D(*0))|&=&n+1-2g(D(*0)),\\
|s_{\partial}(D(*1))|+|\wbar{s}_{\partial}(D)|&=&n+1-2g(D(*1)).
\end{eqnarray*}

Since the genus is an integer, the first equation implies that $|\wbar{s}_{\partial}(D(*0))|$ cannot be equal to $|\wbar{s}_{\partial}(D)|$, so it is either one more, or one less. Similarly, $|s_{\partial}(D(*1))|$ is either one more, or one less than $|s_{\partial}(D)|$. Thus we have four different cases: 

\noindent {\bf Case 1:} 
$|\wbar{s}_{\partial}(D(*0))|=|\wbar{s}_{\partial}(D)|-1\ ,\ |s_{\partial}(D(*1))|=|s_{\partial}(D)|-1\ \to\ g(D)=g(D(*0))=g(D(*1)).$ 

We use the induction hypothesis. Since $|s_{\partial}(D(*0))|=|s_{\partial}(D)|$ and $g(D(*0))=g(D)$, the  $y$-intercepts of the lines for $D(*0)$, are: 

$$-|s_{\partial}(D)|+2,-|s_{\partial}(D)|,\cdots,-|s_{\partial}(D)|-2g(D).$$

The $y$-intercepts of the lines for  $D(*1)[+1]\{+1\}$ are the $y$-intercepts of the lines for $D(*1)$ minus $1$. Since  $|s_{\partial}(D(*1))|=|s_{\partial}(D)|-1$, the $y$-intercepts of the lines for $D(*1)[+1]\{+1\}$ and $D(*0)$ agree, and they are precisely the numbers that we are looking for. Thus the result follows in this case.

\noindent {\bf Case 2:} 
$|\wbar{s}_{\partial}(D(*0))|=|\wbar{s}_{\partial}(D)|+1\ ,\ |s_{\partial}(D(*1))|=|s_{\partial}(D)|-1\ \to\ g(D)=g(D(*0))+1=g(D(*1)).$

In this case, there are $g(D)+1$ lines for $D(*0)$, and their $y$-intercepts  are: 
$$-|s_{\partial}(D)|+2,-|s_{\partial}(D)|,\cdots,-|s_{\partial}(D)|-2g(D)+2.$$

On the other hand for $D(*1)$, the $y$-intercepts are as before. Hence the union of the supports of $D(*0)$ and $D(*1)[+1]\{+1\}$ is again the desired $g(D)+2$ lines.

\noindent {\bf Case 3:} 
$|\wbar{s}_{\partial}(D(*0))|=|\wbar{s}_{\partial}(D)|-1\ ,\ |s_{\partial}(D(*1))|=|s_{\partial}(D)|+1\ \to\ g(D)=g(D(*0))=g(D(*1))+1.$

In this case, there are $g(D)+1$ lines for $D(*1)[+1]\{+1\}$, and their $y$-intercepts are: 
$$-|s_{\partial}(D)|,-|s_{\partial}(D)|,\cdots,-|s_{\partial}(D)|-2g(D).$$

For $D(*0)$, we have the same $g(D)+2$ line, as in case 1. As before, their union is the $g(D)+2$ lines with the desired $y$-intercepts.

\noindent {\bf Case 4:} 
$|\wbar{s}_{\partial}(D(*0))|=|\wbar{s}_{\partial}(D)|+1\ ,\ |s_{\partial}(D(*1))|=|s_{\partial}(D)|+1\ \to\ g(D)=g(D(*0))+1=g(D(*1))+1$.

Combining case 2 and 3, we see that the result follows. 
\end{proof}

\begin{corollary} 
Classical alternating links are $H$-thin. 
\end{corollary}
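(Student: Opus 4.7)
The plan is to derive the corollary as a direct specialization of Theorem \ref{prop}. A classical link is a virtual link of virtual genus zero, so if $D$ is a connected classical alternating diagram representing $L$, then the associated abstract link diagram has genus $g = 0$. Plugging $g = 0$ into the conclusion of Theorem \ref{prop} immediately yields that $Kh^{*,*}(L)$ is supported in $g+2 = 2$ diagonal lines.

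The remaining point is to verify that these two lines are the two adjacent lines $j = 2i - \sigma(L) \pm 1$ of Lee's $H$-thinness theorem. For this I would invoke the relation $\sigma_\xi(L) - \sigma_{\xi^*}(L) = 2g$ stated in the introduction (from Theorem 5.19 of \cite{Karimi}). With $g=0$, the two signatures coincide, and they agree with the classical signature $\sigma(L)$ of the link (since the Goeritz form and the correction term reduce to the classical ones when the surface has genus zero). Therefore the list of lines
\[
j = 2i - \sigma_{\xi^*} + 1, \; j = 2i - \sigma_{\xi^*} - 1, \ldots, \; j = 2i - \sigma_{\xi} - 1
\]
collapses to the two lines $j = 2i - \sigma(L) + 1$ and $j = 2i - \sigma(L) - 1$, which is precisely the statement of $H$-thinness.

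Since there is essentially no new work to do beyond invoking Theorem \ref{prop} and matching notation, I do not expect any real obstacle. The only minor bookkeeping issue is to ensure the checkerboard coloring $\xi$ used here (the one in which every crossing has $\eta = -1$) still produces the classical signature in the planar case; this follows from the fact that in a planar alternating diagram both checkerboard colorings give the same Goeritz form up to the sign conventions, and both yield the classical link signature.
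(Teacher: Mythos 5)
Your proposal is correct and follows exactly the route the paper intends: the corollary is stated as an immediate specialization of Theorem \ref{prop} to a connected classical alternating diagram, whose associated abstract link diagram has supporting genus $g=0$, so the $g+2$ lines collapse to the two lines $j=2i-\sigma(L)\pm 1$ once one notes that $\sigma_\xi=\sigma_{\xi^*}$ equals the classical (Gordon--Litherland) signature in the planar case. No discrepancy with the paper's argument.
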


\begin{corollary}
If $D$ is a connected alternating virtual link diagram with genus $g$ and signatures $\sigma_{\xi},\sigma_{\xi^*}$, then 
$$-\sigma_{\xi}\leq s(D)\leq -\sigma_{\xi^{*}}.$$
\end{corollary}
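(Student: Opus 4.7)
The plan is to combine Theorem \ref{prop} with the standard description of the Lee spectral sequence on knots, mirroring Lee's derivation of $s(K)=-\sigma(K)$ for classical alternating knots. The key input is the support information for $Kh^{*,*}(D)$ already established, so the corollary should follow by a short calculation.

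First, I would recall from Section \ref{two} (and its extension to virtual knots via \cite{DKK}) that the Lee spectral sequence starts from $E_{1}=Kh^{*,*}(D)$ and converges to Lee homology, which for a knot consists of two copies of $\QQ$, both concentrated in homological degree $0$, with quantum filtration degrees $s(D)-1$ and $s(D)+1$. Because each page of the spectral sequence is a successive subquotient of the preceding one, every nonzero $E_{\infty}^{i,j}$ is witnessed by a nonzero class at the same bigrading on $E_{1}=Kh^{i,j}(D)$. In particular, both bigradings $(0,\, s(D)-1)$ and $(0,\, s(D)+1)$ must lie in the support of $Kh^{*,*}(D)$.

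Next, I would specialize Theorem \ref{prop} to $i=0$. The admissible $j$-values on the supporting lines at $i=0$ form the set $\{-\sigma_{\xi^*}+1,\, -\sigma_{\xi^*}-1,\, \ldots,\, -\sigma_{\xi}-1\}$, whose maximum is $-\sigma_{\xi^*}+1$ and whose minimum is $-\sigma_{\xi}-1$. Requiring $s(D)+1$ to be at most the maximum yields $s(D)\leq -\sigma_{\xi^*}$, and requiring $s(D)-1$ to be at least the minimum yields $s(D)\geq -\sigma_{\xi}$. Combining these gives the desired chain $-\sigma_{\xi}\leq s(D)\leq -\sigma_{\xi^*}$.

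The only point that needs verification is the spectral sequence statement that surviving $E_{\infty}$ generators sit at bigradings where $Kh$ is nonzero. This is routine for filtered complexes and carries over to the virtual setting because the Dye--Kaestner--Kauffman Lee differential preserves the homological shift and is compatible with the decreasing quantum filtration whose associated graded is the Khovanov complex. I do not anticipate any serious obstacle; the corollary is essentially an immediate consequence of Theorem \ref{prop} once the Lee-homology location of the two generators is invoked.
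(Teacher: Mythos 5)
Your argument is correct and is exactly the paper's intended proof: the paper's own justification is the one-line "This follows from the previous theorem, and Lee's spectral sequence," and your write-up simply supplies the details of that same reasoning (locating the two Lee generators at $(0,s(D)\pm 1)$ and intersecting with the support lines of Theorem \ref{prop} at $i=0$). No further comparison is needed.
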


\begin{proof}
This follows from the previous theorem, and Lee's spectral sequence. 
\end{proof}

\begin{example}
For the classical knot $K=9_{42}^{*}$ shown on the left of Figure \ref{fig-9-42-alt}, the Khovanov polynomial is as follows
$$\frac{1}{q^7t^4}+ \frac{1}{q^3t^3}+ \frac{1}{q^3t^2}+ \frac{1}{qt}+\frac{q}{t}+ \frac{1}{q}+q+ q^3+ q^3t+ q^7t^2,$$
which is supported in three lines. 

Observe that, given a 2-strand classical tangle with $n$ half-twists, applying the 
{\bf or} move to each of the crossings has the effect of adding two virtual crossings 
 at either end of the tangle (see Figure \ref{last}).

\begin{figure}[h!]

\includegraphics[height=16mm]{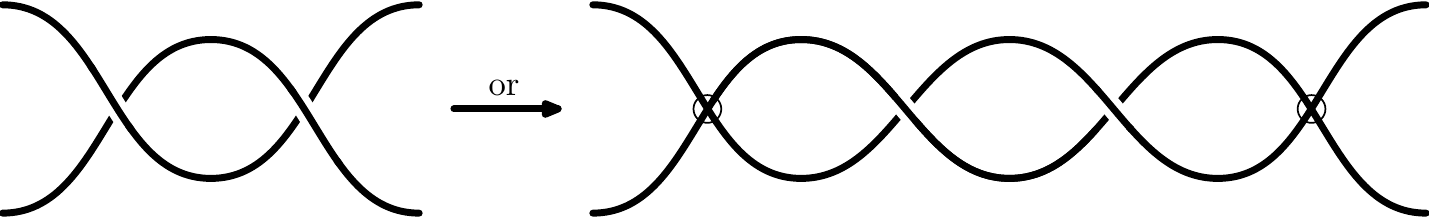}\vspace{1cm}

\includegraphics[height=16mm]{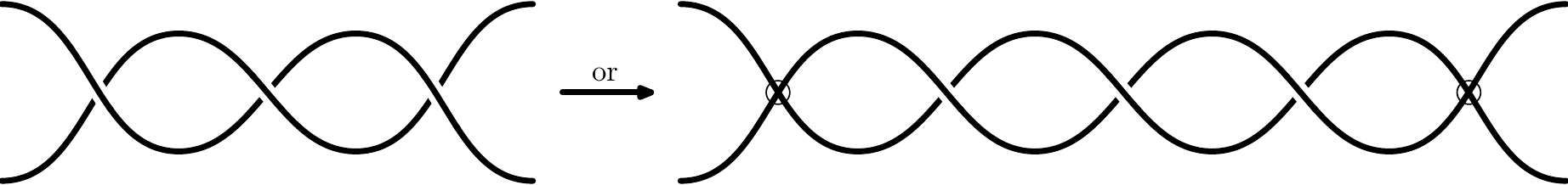}
\caption{The effect of applying {\bf or} to a 2-strand classical tangle with $n$ half-twists.}
\label{last}
\end{figure}
\vspace{.3cm}

Let $\xi$ be the coloring in which the unbounded region is white. 
Then the five crossings in the top half of $9_{42}^*$ of Figure \ref{fig-9-42-alt} have $\eta =-1$ and the four crossings in the bottom half have $\eta=1.$
The five crossings with $\eta=-1$ occur in two 2-strand classical tangle, one with three positive crossings and the other with two negative crossings. 
Thus, the above observation implies that applying the {\bf or} move to the five crossings with $\eta=-1$ results in the alternating virtual knot shown on the right of Figure \ref{fig-9-42-alt}.  
Then $(\sigma_{\xi^{*}},\sigma_{\xi})=(-2,0)$, and by Theorem \ref{prop}, its Khovanov homology, which coincides with the Khovanov homology of $K=9_{42}^{*}$ is supported in the following three lines:
$$j=2i+3,j=2i+1,j=2i-1.$$
\end{example}

\begin{figure}[h!]
\centering\includegraphics[height=46mm]{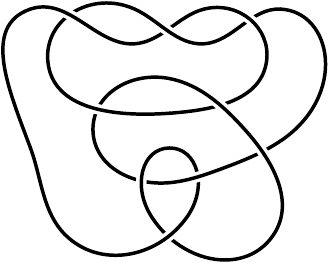}\hspace{1.2cm}
\includegraphics[height=46mm]{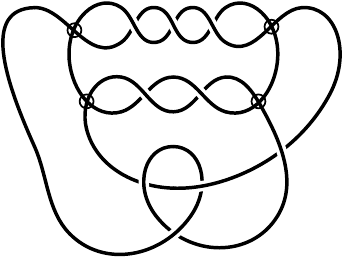}
\caption{The classical knot $9_{42}^{*}$ (left) and the virtual knot $(9_{42}^{*})_{alt}$ (right).}
\label{fig-9-42-alt}
\end{figure}
\vspace{.3cm}

Similar to the Turaev genus (cf.~Definition \ref{def-Turaev}), we have the following definition.

\begin{definition}
Let $K$ be a non-split checkerboard colorable virtual link. The \emph{alternating genus} of $K$ is 
$$g_{alt}(K)=\min\{g(D_{alt}) \mid D\ \text{is\ a\ checkerboard\ diagram\ for\ }K\},$$
where $g(D_{alt})$ is the supporting genus of $D_{alt}$.
\end{definition}

\begin{lemma} \label{lem-alt-Turaev}
If $L$ is a non-split classical link, then $g_{alt}(L)\leq g_T(L)$.
\end{lemma}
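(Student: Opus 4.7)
The plan is to take a connected classical diagram $D$ of $L$ that realizes the Turaev genus, so that $g_T(D)=g_T(L)$, and to fix any checkerboard coloring $\xi$ of $D$ (which exists because $D$ lives on $S^{2}$). I would then form $D_{alt}$ by applying \textbf{or} at every crossing $c$ with $\eta(c)=-1$. By Table \ref{effect}, the move \textbf{or} flips $\eta(c)$, so every crossing of $D_{alt}$ satisfies $\eta=+1$; Lemma \ref{eta} then gives that $D_{alt}$ is alternating, and Lemma \ref{lem-bound-prop} guarantees it has the boundary property. Since $D$ is in particular a checkerboard diagram for $L$, this yields $g_{alt}(L)\leq g(D_{alt})$, and the whole task reduces to proving $g(D_{alt})=g_T(D)$.

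To compute $g(D_{alt})$, I would apply Lemma \ref{boundary lemma} to $D_{alt}$ to obtain
$$2\,g(D_{alt}) \;=\; c(D_{alt}) + 2 - |s_\partial(D_{alt})| - |\wbar{s}_\partial(D_{alt})|.$$
Because every crossing of $D_{alt}$ satisfies $\eta=+1$, the explicit construction in the proof of Lemma \ref{lem-bound-prop} identifies $s_\partial(D_{alt})$ with the all-$0$ smoothing state of $D_{alt}$ and $\wbar{s}_\partial(D_{alt})$ with the all-$1$ smoothing state, so the right-hand side is precisely the Turaev expression of Definition \ref{def-Turaev} read on $D_{alt}$.

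The main obstacle, and the step deserving the most care, is to compare these quantities across the \textbf{or} move, i.e.\ to verify that $c(D_{alt})=c(D)$, $|s_0(D_{alt})|=|s_0(D)|$ and $|s_1(D_{alt})|=|s_1(D)|$. The crossing count is immediate from the local form of \textbf{or}. For the smoothing states, the point is that \textbf{or} is implemented locally by inserting two virtual crossings on a single strand running through the affected classical crossing, leaving the over/under data (and hence the local $0$- and $1$-smoothings) untouched. Since virtual crossings are transparent pass-throughs as far as tracing a state is concerned, the underlying $1$-manifolds of $s_0(D_{alt})$ and $s_0(D)$ (respectively $s_1(D_{alt})$ and $s_1(D)$) have the same number of components. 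Substituting these equalities back gives $g(D_{alt})=g_T(D)=g_T(L)$, and hence $g_{alt}(L)\leq g_T(L)$, as claimed.
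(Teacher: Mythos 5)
Your proof is correct and follows essentially the same route as the paper: apply \textbf{or} to the $\eta=-1$ crossings of a Turaev-genus-minimizing classical diagram, identify $s_\partial(D_{alt})$ and $\wbar{s}_\partial(D_{alt})$ with the all-$0$ and all-$1$ states, and compare the genus formula of Lemma \ref{boundary lemma} with Definition \ref{def-Turaev}. The only difference is cosmetic: you spell out why the \textbf{or} move preserves $c(D)$, $|s_0|$ and $|s_1|$ (virtual crossings being transparent when tracing states), a point the paper asserts more tersely and elsewhere attributes to \cite{DKK}.
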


\begin{proof}
Let $D$ be a classical diagram for $L$. We obtain $s_{\partial}$ by $0$-smoothing crossings $c$ with $\eta(c)=+1$, and $1$-smoothing crossings $c$ with $\eta(c)=-1$. To obtain $D_{alt}$ we apply {\bf or} exactly to crossings $c$ with $\eta(c)=-1$. In $D_{alt}$ black and white disks are obtained by switching $0$ and $1$ smoothing and vice versa for crossings $c$ with $\eta(c)=-1$. This means $s_{\partial}(D_{alt})=s_0(D)$ and $\wbar{s}_{\partial}(D_{alt})=s_1(D)$, and we have: 
\begin{eqnarray*}
g(D_{alt})&=&\frac{1}{2}\left(c(D_{alt})+2-|s_{\partial}(D_{alt})|-|\wbar{s}_{\partial}(D_{alt})|\right),\\
&=&\frac{1}{2}\left(c(D)+2-|s_{0}(D)|-|s_{1}(D)|\right)=g_T(D).
\end{eqnarray*}
This shows $g_{alt}(L)\leq g_T(L)$.  
\end{proof}

The following corollary is immediate, and is a generalization of Corollary 3.1 in \cite{Kofman07}, which 
was first obtained by Manturov in \cite{Man}.

\begin{corollary}\label{cor3}
For any checkerboard colorable (in particular, classical) link $K$, the Khovanov homology 
of $K$ is supported in $g_{alt}+2$ lines, i.e.~the homological  width of the Khovanov homology is less than or equal to $g_{alt}+2$.   
\end{corollary}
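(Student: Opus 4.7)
The plan is to assemble this corollary directly from the machinery already developed, so there is essentially no new analytic work required; the argument is a bookkeeping combination of Theorem \ref{prop}, the invariance of Khovanov homology under the \textbf{or} move, and the definition of $g_{alt}$.

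First I would choose a checkerboard diagram $D$ for $K$ that realizes the alternating genus, i.e.~a diagram with $g(D_{alt}) = g_{alt}(K)$. Such a diagram exists by the definition of $g_{alt}$ as a minimum over all checkerboard diagrams. The construction at the start of Section \ref{four} then produces $D_{alt}$ by applying \textbf{or} at every crossing $c$ with $\eta(c) = -1$. By Lemma \ref{eta}, the resulting diagram is alternating, since every crossing of $D_{alt}$ now has $\eta = +1$.

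Next I would invoke the Remark in Section \ref{three} stating that the \textbf{or} move leaves the cube of resolutions, and hence the Khovanov complex, unchanged. Consequently $Kh^{i,j}(D) \cong Kh^{i,j}(D_{alt})$ for all $i,j$. Since $D$ represents $K$, this identifies the Khovanov homology of $K$ with that of the alternating diagram $D_{alt}$.

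At this point Theorem \ref{prop} applies to $D_{alt}$: as a connected alternating virtual link diagram of genus $g(D_{alt}) = g_{alt}(K)$ with some signatures $\sigma_\xi, \sigma_{\xi^*}$, its Khovanov homology is supported on exactly $g_{alt}(K) + 2$ diagonal lines. Transporting this conclusion back across the isomorphism $Kh^{*,*}(K) \cong Kh^{*,*}(D_{alt})$ shows that $Kh^{*,*}(K)$ is supported on $g_{alt}(K) + 2$ lines, which is precisely the bound on homological width. The classical case is then the special case in which one combines this bound with Lemma \ref{lem-alt-Turaev}, giving $g_{alt}(K) \leq g_T(K)$ and thereby recovering the Turaev-genus bound of \cite{Kofman07, Man}. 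The only place where care is needed is the tacit assumption that $D_{alt}$ is connected whenever $D$ is; this is automatic because \textbf{or} is a local move that does not alter the underlying 4-valent graph $|D|$, so connectedness of $D$ transfers to $D_{alt}$ and Theorem \ref{prop} is applicable without modification.
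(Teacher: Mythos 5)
Your argument is exactly the one the paper intends: the paper declares Corollary \ref{cor3} ``immediate'' from the construction of $D_{alt}$, the invariance of Khovanov homology under the \textbf{or} move, Theorem \ref{prop}, and the definition of $g_{alt}$, and you have simply spelled out that chain of reasoning (including the harmless connectedness check). The proposal is correct and follows the same route.
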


Here we have another proof for Corollary 3.1 in \cite{Kofman07}.

\begin{corollary}
For any classical non-split link L with Turaev genus $g_T(L)$, the thickness of the
(unreduced) Khovanov homology of K is less than or equal to $g_T(L)+2$.
\end{corollary}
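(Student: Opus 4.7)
The plan is to deduce this statement by simply chaining together the two immediately preceding results, since both apply to any classical non-split link. First I would invoke Corollary \ref{cor3}: since every classical link $L$ is in particular checkerboard colorable, its (unreduced) Khovanov homology is supported in at most $g_{alt}(L)+2$ diagonal lines, so its homological width (thickness) is at most $g_{alt}(L)+2$.

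Second, I would apply Lemma \ref{lem-alt-Turaev} to the non-split classical link $L$, which gives the comparison $g_{alt}(L) \leq g_T(L)$. Combining the two inequalities yields
\[
\text{thickness}(Kh^{*,*}(L)) \;\leq\; g_{alt}(L)+2 \;\leq\; g_T(L)+2,
\]
which is exactly the desired bound.

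Since both ingredients are already established, there is essentially no hard step; the only thing worth spelling out carefully is that the hypotheses match: non-splitness is needed to have $g_T(L)$ well-defined via Definition \ref{def-Turaev}, classical links are automatically checkerboard colorable so Corollary \ref{cor3} applies, and the notion of ``thickness'' used in \cite{Kofman07} agrees with the homological width appearing in Corollary \ref{cor3} (the number of supporting diagonals minus, or plus, the standard normalization). No new computation, induction, or geometric argument is required beyond citing the two preceding results.
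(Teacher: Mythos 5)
Your proposal is correct and is essentially identical to the paper's own proof, which likewise combines Corollary \ref{cor3} with the inequality $g_{alt}(L)\leq g_T(L)$ from Lemma \ref{lem-alt-Turaev}. Nothing is missing.
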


\begin{proof}
We have $g_{alt}(L)\leq g_T(L)$, and $g_{alt}(L)+2$ is an upper bound for the thickness of the Khovanov homology. The result is immediate. 
\end{proof}

\begin{lemma}
Suppose $D$ is a positive alternating virtual knot. Then $s(D)=-\sigma_{\xi^*}(D)$. 
\end{lemma}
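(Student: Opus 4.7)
The plan is to evaluate both $s(D)$ and $-\sigma_{\xi^*}(D)$ directly and check they agree. Let $n$ denote the number of (necessarily positive) crossings of $D$ and let $k$ be the number of cycles in the all-$0$ smoothing state $s_{0}$. The positive-knot formula for the Rasmussen invariant recalled in Section~\ref{two} asserts
\[
s(D) = -k + n + 1;
\]
although stated there only for classical knots, the argument goes through verbatim in the virtual setting of \cite{DKK}, since the oriented resolution of a positive diagram is its all-$0$ smoothing and the canonical Lee generator lives in filtration degree $-k + n + 1$.

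Next I would compute $\sigma_{\xi^*}(D) = \sig(G_{\xi^*}(D)) - \mu_{\xi^*}(D)$ directly from the definitions. Recall from Theorem~\ref{prop} that the coloring $\xi$ is chosen so that $\eta(c) = -1$ for every crossing; hence under the dual coloring $\xi^*$ every crossing has $\eta(c) = +1$. Combined with $\varepsilon(c) = +1$, this forces every crossing to be of type II under $\xi^*$, and so $\mu_{\xi^*}(D) = \sum_c \eta(c) = n$. Moreover, by the construction in the proof of Lemma~\ref{lem-bound-prop}, $s_\partial(\xi^*)$ is precisely the all-$0$ smoothing of $D$, so its $k$ cycles bound exactly the white regions under $\xi^*$; there are therefore $k$ white regions.

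With $\eta \equiv +1$ under $\xi^*$, the pre-Goeritz matrix satisfies $(G'_{\xi^*})_{ij} = -|C_{ij}(D)| \leq 0$ for $i \neq j$, with row sums vanishing. This is exactly the combinatorial Laplacian of the white-region graph of $D$ (vertices are white regions, edges are classical crossings joining distinct white regions, counted with multiplicity). Since $D$ is connected this graph is connected, so its Laplacian is positive semidefinite of rank $k-1$ with kernel spanned by the all-ones vector. Striking the $0$-th row and column as in the definition of $G_{\xi^*}(D)$ yields a positive-definite matrix of size $(k-1) \times (k-1)$, and therefore $\sig(G_{\xi^*}(D)) = k-1$. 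Combining,
\[
\sigma_{\xi^*}(D) = (k-1) - n, \qquad -\sigma_{\xi^*}(D) = n - k + 1 = s(D),
\]
as desired.

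The main obstacle is justifying that the positive-knot Rasmussen formula survives the passage to the DKK virtual extension: one must verify that the canonical Lee generator supported on the oriented (= all-$0$) resolution has filtration degree $-k + n + 1$ in Tubbenhauer's/DKK's version of Lee homology. This is a routine adaptation of Rasmussen's classical computation, but if one prefers to sidestep it entirely, the identity can also be extracted from the upper bound $s(D) \leq -\sigma_{\xi^*}(D)$ supplied by the corollary to Theorem~\ref{prop}, combined with a lower bound $s(D) \geq n - k + 1$ produced by a direct filtration-degree computation on the all-$0$ resolution generator.
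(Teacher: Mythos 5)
Your proof is correct and follows the same overall strategy as the paper's: evaluate both $s(D)$ and $\sigma_{\xi^*}(D)$ in terms of the number of crossings $n=n_+$ and the number $k$ of cycles in the all-$0$ state, and observe that both equal $\pm(n-k+1)$. The difference lies in how the two ingredients are justified. The paper's proof consists essentially of two citations: the identity $\sigma_{\xi^*}(D)=\beta-1-n_+$ (with $\beta=k$) is quoted from \cite{Karimi}, and the formula $s(K)=1-\beta+n_+$ for positive \emph{virtual} knots is quoted from \cite{DKK}. In particular, the ``main obstacle'' you flag --- whether the positive-knot Rasmussen formula survives the passage to the virtual setting --- is precisely what the citation to \cite{DKK} supplies, so your fallback argument via the corollary to Theorem \ref{prop} (which does work) is not needed. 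What your write-up adds is a self-contained derivation of the signature formula that the paper outsources: under $\xi^*$ every crossing has $\eta=+1$ and is type II, so $\mu_{\xi^*}(D)=n$; the white regions of $\xi^*$ correspond to the $k$ components of the all-$0$ state by the proof of Lemma \ref{lem-bound-prop}; and the pre-Goeritz matrix is the Laplacian of the white Tait graph, which is connected because $D$ is, so the reduced matrix $G_{\xi^*}(D)$ is positive definite of size $k-1$ and $\sig(G_{\xi^*}(D))=k-1$. Hence $\sigma_{\xi^*}(D)=k-1-n$, in agreement with the formula the paper cites. This Laplacian computation is a nice, elementary reproof of the input from \cite{Karimi} and makes the lemma independent of that reference.
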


\begin{proof}
Since $D$ is alternating, $\sigma_{\xi^*}=\beta-1-n_{+}$, where $\beta$ is the number of all $0$-smoothing state (see \cite{Karimi}). For any positive knot $K$, we have $s(K)=1-\beta+n_{+}$ (see \cite{DKK}). 
\end{proof}

For a negative virtual knot (for example $K=5.2426$ depicted in Figure \ref{fig-5-2426}), the Rasmussen invariant and the signatures are the negatives of the corresponding invariants for its vertical mirror image (a positive virtual knot). It follows that $s(K)=-\sigma_{\xi}(K)$. 
In general it is not true that the Rasmussen invariant is the negative of one of the signatures for alternating virtual knots. For example, the virtual knot $5.2427$ is alternating (see  Figure \ref{fig-5-2426}), has Rasmussen invariant $s(K)=-2$, and signatures $\sigma_\xi(K)=4$ and $\sigma_{\xi^*}(K)=0$.

\begin{figure}[h!]
\centering\includegraphics[height=46mm]{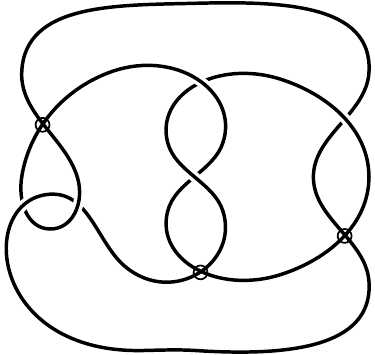}
\qquad \qquad 
\includegraphics[height=46mm]{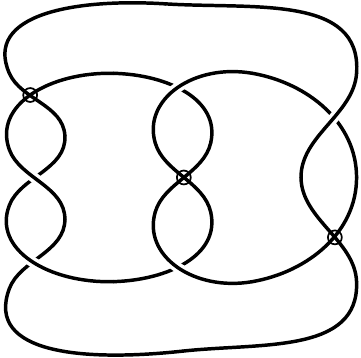}
\caption{Alternating virtual knots $5.2426$ (left) and $5.2427$ (right).} \label{fig-5-2426}
\end{figure}
\vspace{.3cm}

Ordinarily, for Khovanov homology of virtual links, one assigns the zero map to each single cycle smoothing. Alternative theories can be constructed using different maps for the single cycle smoothings, and as virtual knot homologies, these are generally stronger than the usual Khovanov homology for virtual knots. For example, in \cite{Rushworth} Rushworth uses this approach to define a variant theory called doubled Khovanov homology. For virtual links whose cube of resolutions has no single cycle smoothings,  the doubled Khovanov homology is the direct sum of two copies of ordinary Khovanov homology. In that case, the doubled Khovanov homology is completely determined by the ordinary Khovanov homology and thus it contains no new information.

We will show that when the underlying virtual link diagram is checkerboard colorable, there are no single cycle smoothings in its cube of resolutions. This was first proved by Rushworth  \cite{Rushworth}, and here we provide a different proof.


\begin{lemma}
Let $D$ be an alternating link diagram, and $s_{\partial}$ be the all $0$-smoothing state, and $\wbar{s}_{\partial}$ the all $1$-smoothing state. If we change one $0$-smoothing to obtain the state $s$, the number of components of $s_{\partial}$ and $s$, differs by one. A similar result holds for $\wbar{s}_{\partial}$.
\end{lemma}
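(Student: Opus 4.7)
The plan is to combine a short local analysis at the flipped crossing with a parity argument obtained by applying Lemma~\ref{boundary lemma} twice. Write $n$ for the number of classical crossings of $D$, set $s_0:=s_\partial$ and $s_1:=\wbar s_\partial$, and let $c$ denote the crossing whose $0$-smoothing in $s_0$ is being flipped to produce $s$.

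First I would carry out the local analysis at $c$. Labelling the four half-edges of $c$ cyclically as $1,2,3,4$, the remainder of the all-$0$ state pairs these four half-edges by a perfect matching (the ``outside matching''). Enumerating the three possible matchings shows that flipping the smoothing at $c$ changes the number of cycles by $+1$, $-1$, or $0$, with the value $0$ occurring precisely when the outside matching is $\{1,3\},\{2,4\}$; this is the single cycle smoothing case. It therefore suffices to prove that $|s|-|s_0|$ is odd, which I plan to do by comparing two applications of Lemma~\ref{boundary lemma}.

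Because $D$ is alternating it is checkerboard colorable (Lemma~\ref{lem-bound-prop}); fix the coloring $\xi$ in which every crossing has $\eta=-1$, so that the boundary state of $D$ is $s_1$ and its dual is $s_0$. Lemma~\ref{boundary lemma} gives
\[
|s_0|+|s_1| \;=\; n+2-2g(D).
\]
Let $D'$ denote the virtual diagram obtained from $D$ by $1$-smoothing $c$. The $1$-smoothing arcs at $c$ locally separate regions of opposite color, so $\xi$ extends to a checkerboard coloring of $D'$; the incidences of the remaining $n-1$ crossings are unchanged, so they all still satisfy $\eta=-1$. Hence the boundary state of $D'$ is its all-$1$ state, which coincides with $s_1$, and its dual is the all-$0$ state of $D'$, which is exactly $s$. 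Lemma~\ref{boundary lemma} applied to $D'$ yields
\[
|s|+|s_1| \;=\; (n-1)+2-2g(D').
\]
Subtracting,
\[
|s_0|-|s| \;=\; 1 + 2\bigl(g(D')-g(D)\bigr),
\]
which is odd and in particular nonzero. Combined with the local trichotomy above, this forces $|s|-|s_0|=\pm 1$, proving the lemma. The statement about $\wbar s_\partial$ follows by the symmetric argument, flipping a $1$-smoothing instead and using the dual coloring.

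The main subtlety is the bookkeeping in the third paragraph: one must verify that $\xi$ really extends to $D'$ and that the boundary state of $D'$ is identified correctly. A minor additional point is that $D'$ may become disconnected if smoothing $c$ separates the underlying graph, in which case the right-hand side of Lemma~\ref{boundary lemma} becomes $n-1+2k-2g_{\mathrm{tot}}$ for $k$ components; however $2k-2g_{\mathrm{tot}}$ is still even, so the parity of $|s_0|-|s|$ is unaffected and the conclusion is unchanged.
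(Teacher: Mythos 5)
Your proof is correct and follows essentially the same route as the paper: both compare the boundary-lemma identity for $D$ with the one for the diagram obtained by $1$-smoothing the flipped crossing, subtract to get $|s_\partial|-|s|=1+2\bigl(g(D')-g(D)\bigr)$, and conclude from oddness. Your version merely makes explicit two points the paper leaves implicit — the local trichotomy $\{+1,-1,0\}$ that turns ``odd'' into ``$\pm1$'', and the verification that the coloring and boundary state carry over to $D'$ — which is a welcome tightening but not a different argument.
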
    

\begin{proof}
Assume we change the smoothing in the last crossing. We consider $D(*1)$, which is an alternating diagram and has the boundary property. If $D$ has $c$ crossings, and $g$ and $g_{1}$ are the genera for $D$ and $D(*1)$ respectively, we have:
\begin{eqnarray*}
|s_{\partial}|+|\wbar{s}_{\partial}| &=& c+2-2g,\\
|s|+|\wbar{s}_{\partial}| &=& c-1+2-2g_{1},\\
|s_{\partial}|-|s| &=& 1+2(g_{1}-g).
\end{eqnarray*}

Thus the difference is an odd number, and the result follows. The proof for the other case is similar. 
\end{proof}

\begin{proposition}
Let $D$ be an alternating link diagram. Then there is no single cycle smoothing in the cube of resolutions for $D$.
\end{proposition}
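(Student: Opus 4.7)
The plan is to localize the argument of the previous lemma to an arbitrary edge of the cube of resolutions. Given adjacent states $\alpha$ and $\alpha'$ of $D$ that differ at a single crossing $c$, I form the virtual link diagram $D_\alpha$ by resolving every crossing of $D$ other than $c$ according to $\alpha$. Then $D_\alpha$ has exactly one classical crossing, and its two states are precisely $\alpha$ and $\alpha'$.

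Next I verify that the checkerboard coloring $\xi$ of $D$ descends to a checkerboard coloring of $D_\alpha$: each local smoothing merges two adjacent regions of the same color while preserving the alternating color pattern at the remaining crossing $c$. Granting this, Lemma \ref{lem-bound-prop} implies that $D_\alpha$ has the boundary property, and Lemma \ref{boundary lemma} applied to $D_\alpha$ yields
\[
|s_{\partial}(D_\alpha)| + |\wbar{s}_{\partial}(D_\alpha)| = 1 + 2 - 2g = 3 - 2g,
\]
where $g$ is the genus of $S_{D_\alpha}$. Since $D$ is alternating, all of its crossings share the same incidence by Lemma \ref{eta}; in particular $c$ keeps that incidence in $D_\alpha$, so the two boundary states $\{s_{\partial}(D_\alpha),\wbar{s}_{\partial}(D_\alpha)\}$ coincide with $\{\alpha,\alpha'\}$.

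The remainder is a parity argument. The displayed sum is odd, so $|\alpha|+|\alpha'|$ is odd, and hence $|\alpha|-|\alpha'|$ is odd as well. Since a local smoothing change at a single crossing alters the number of cycles by at most one, this forces $||\alpha|-|\alpha'||=1$, which rules out the single-cycle smoothing. The main obstacle I expect is the first step: carefully justifying that the checkerboard coloring, and the resulting identification of the two boundary states of $D_\alpha$ with $\alpha$ and $\alpha'$, genuinely pass to the partial resolution $D_\alpha$. Once this bookkeeping is in order, the conclusion is a direct generalization of the parity calculation used in the previous lemma.
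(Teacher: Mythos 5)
Your argument is correct, but it takes a different route from the paper's. The paper reduces an arbitrary edge of the cube to an edge incident to the all-$0$ state: it applies the \textbf{sc} operation to every crossing resolved as a $1$-smoothing in $s$, observes (via Lemma \ref{eta}, since \textbf{sc} preserves incidence) that the resulting diagram $D'$ is still alternating and has $s$ as its all-$0$ state, and then invokes the preceding lemma, whose proof compares the counts from Lemma \ref{boundary lemma} for $D'$ and $D'(*1)$. You instead pass to the one-crossing partial resolution $D_\alpha$ and apply Lemma \ref{boundary lemma} to it directly with $n=1$; since a one-crossing diagram has exactly two states and they are automatically dual to one another, the identification $\{s_\partial(D_\alpha),\wbar{s}_\partial(D_\alpha)\}=\{\alpha,\alpha'\}$ is immediate (your appeal to Lemma \ref{eta} here is not even needed), and the parity of $3-2g$ finishes the argument exactly as in the paper's parity step. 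Both proofs hinge on the same unproved-but-true bookkeeping fact, namely that checkerboard colorability (equivalently, the boundary property) descends to partial resolutions: the paper uses it when it asserts in the preceding lemma that $D(*1)$ is alternating with the boundary property, and you use it to color $D_\alpha$; your local merging argument for this is the right one and is worth writing out once. What your route buys is generality: nowhere do you use that $D$ is alternating, only that it is checkerboard colorable, so your single argument also proves the paper's subsequent proposition for checkerboard colorable diagrams without the detour through $D_{alt}$.
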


\begin{proof}
Assume we change a $0$-smoothing of the state $s$ to a $1$-smoothing at the crossing $c_{i}$. If for all the other crossings, we have $0$-smoothing in $s$, then this is the previous lemma. Otherwise, we apply {\bf sc} to the crossings of $D$, which have been resolved to $1$-smoothings in $s$. Call the new diagram $D'$. Since the state $s$ is the all $0$-smoothing state for $D'$, the result follows from the previous lemma.   
\end{proof}

\begin{proposition}
Let $D$ be a checkerboard colorable link diagram. Then there is no single cycle smoothing in the cube of resolutions for $D$.
\end{proposition}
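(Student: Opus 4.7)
The plan is to reduce the statement directly to the preceding proposition by passing from $D$ to its associated alternating diagram $D_{alt}$. Given a checkerboard colorable diagram $D$ with coloring $\xi$, apply {\bf or} at every crossing $c$ with $\eta(c)=-1$; by Table \ref{effect} this flips $\eta(c)$ to $+1$ while leaving the writhe and the crossing type alone. The resulting diagram $D_{alt}$ has all crossings of incidence $+1$, so by Lemma \ref{eta} it is alternating. This is exactly the construction used at the beginning of Section \ref{four}.

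The essential input is the observation recorded in the remark following Example \ref{ex37}: the operation {\bf or} leaves the cube of resolutions unchanged, since $0$- and $1$-smoothings of a classical crossing (Figure \ref{resolving}) depend only on the over/under data and not on the orientations of the strands. Iterating this over all crossings with $\eta(c) = -1$, one obtains an identification between the cube of resolutions of $D$ and that of $D_{alt}$, state by state and edge by edge. In particular, an edge of the cube for $D$ corresponds to a single cycle smoothing if and only if the corresponding edge in the cube for $D_{alt}$ does.

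The preceding proposition, applied to the alternating diagram $D_{alt}$, now says that the cube of resolutions of $D_{alt}$ contains no single cycle smoothing; transporting this along the identification of cubes yields the same conclusion for $D$. There is no substantial obstacle in the argument: the only thing one needs to confirm is that the $0/1$ labeling is invariant under {\bf or}, which is precisely the content of the fact cited from \cite{DKK}. Once this is granted, the proposition follows immediately from the alternating case already handled.
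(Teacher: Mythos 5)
Your strategy---pass to $D_{alt}$ and quote the alternating case---is exactly the strategy of the paper's own proof, so the approach is the same. Two small points in your write-up deserve attention. First, a minor slip: Table \ref{effect} records that {\bf or} preserves the writhe but flips \emph{both} the incidence and the type of the crossing, not just the incidence; this is harmless for your argument. Second, and more substantively, the step you identify as the crux---that the $0/1$ labelling of the two smoothings at a crossing is invariant under {\bf or}---is precisely the point where the paper's proof is more careful than yours. The paper does not simply assert that the cubes of $D$ and $D_{alt}$ are identical; it splits into cases according to whether the crossing $c_i$ at which the smoothing is changed lies in the set $C'$ of modified crossings, and in the case $c_i\in C'$ it observes that the corresponding edge in the cube for $D_{alt}$ has its direction \emph{reversed}, i.e.\ the two geometric smoothings at that crossing trade their $0/1$ labels (consistent with the fact that {\bf or} flips $\eta$ while the smoothing compatible with the coloring is determined by the sign of $\eta$). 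Fortunately this does not damage your conclusion: an edge of the cube is a single cycle smoothing exactly when its two endpoint states have the same number of cycles, a condition symmetric in the two endpoints and hence insensitive to which endpoint carries the $0$ and which the $1$. With that one remark added in place of the invariance claim, your proof coincides with the one in the paper.
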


\begin{proof}
Assume we change one $0$-smoothing of the state $s$ to a $1$-smoothing at the crossing $c_{i}$, and call the resulting state $s'$. First we consider $D_{alt}$. Let $C'=\{c_{i_1},\ldots,c_{i_k}\}$ be the set of crossings of $D$ which are changed to obtain $D_{alt}$. There are two cases. If $c_{i}$ does not belong to $C'$, then the edge with vertices $s$ and $s'$ corresponds exactly to an edge in the cube of resolutions for $D_{alt}$, and the result follows.

If $c_{i}\in C'$, then the same thing happens. The only difference is the direction of the map in $D_{alt}$ is reversed, going from $s'$ to $s$. The result still holds.
\end{proof}

\begin{corollary}
If $D$ is a checkerboard colorable link diagram, then the doubled Khovanov homology for $D$ is the direct sum of two copies of the ordinary Khovanov homology for $D$. 
\end{corollary}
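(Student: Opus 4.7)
The plan is to deduce the corollary directly from the preceding proposition together with Rushworth's structural observation (already quoted earlier in the paper) that doubled Khovanov homology agrees with two copies of ordinary Khovanov homology whenever the cube of resolutions contains no single cycle smoothings. The preceding proposition supplies exactly this hypothesis for checkerboard colorable diagrams, so what remains is essentially to unpack the splitting.

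Concretely, I would proceed as follows. First, recall from \cite{Rushworth} the construction of the doubled Khovanov complex: one attaches to each state a tensor product of copies of a rank-four module which contains two copies of the Frobenius algebra $V=\QQ\oo\oplus\QQ X$, and on edges corresponding to ordinary joining or splitting smoothings the differential is the direct sum of two copies of the usual Khovanov edge map, acting as $m\oplus m$ or $\Delta\oplus\Delta$. The only edges on which the two summands get coupled are those carrying a single cycle smoothing; there Rushworth prescribes a nontrivial map, whereas the ordinary theory assigns the zero map.

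Applying the previous proposition to our checkerboard colorable diagram $D$, no edge in the cube of resolutions is a single cycle smoothing. Consequently, every edge in the doubled complex preserves the internal direct sum decomposition, and we can write the whole doubled Khovanov complex of $D$ as $\mathcal{C}Kh(D)\oplus \mathcal{C}Kh(D)$ with differential $d_{Kh}\oplus d_{Kh}$. Taking homology commutes with the direct sum, yielding that the doubled Khovanov homology of $D$ is isomorphic to $Kh^{*,*}(D)\oplus Kh^{*,*}(D)$, as claimed.

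The main (and essentially only) obstacle is bookkeeping: one must confirm that the joining and splitting maps in the doubled theory of \cite{Rushworth} really do reduce on each summand to the classical Khovanov maps $m$ and $\Delta$ used in Sections \ref{two} and \ref{three}, so that the splitting is of complexes rather than merely of graded vector spaces. This is definitional in \cite{Rushworth} and requires no new argument beyond a careful reading of the conventions for the decorated maps in Table \ref{decoration} as applied to the doubled coefficient module.
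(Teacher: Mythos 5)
Your proposal is correct and follows exactly the route the paper intends: the corollary is an immediate consequence of the preceding proposition (no single cycle smoothings for checkerboard colorable diagrams) combined with the fact, already quoted from \cite{Rushworth}, that in the absence of single cycle smoothings the doubled complex splits as two copies of the ordinary Khovanov complex. The paper leaves this deduction implicit; your unpacking of the splitting at the level of complexes is a harmless elaboration of the same argument.
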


We have calculated the Rasmussen invariant for all alternating virtual knots up to 6 crossings, and the results are listed in Table \ref{ras-table}. Note that the
Rasmussen invariants are calculated for virtual knots up to 4 crossings in \cite{rush}.

In \cite{Boden2019}, Boden and Chrisman list the number of all virtual knots up to 6 crossings with unknown slice status (cf.~\cite{MTables}). The following example concerns three non-alternating virtual knots whose slice status was previously unknown.
We use Rasmussen invariants to show they are not slice and deduce that they have slice genus equal to one. 

\begin{example}
Consider the three non-alternating virtual knots 6.31460, 6.52378, and 6.66907 depicted in Figure \ref{Fig:non-slice}. They all have isomorphic Khovanov homology, and the Khovanov polynomial is as follows
$$\frac{1}{q^{3}t^{2}}+\frac{1}{qt}+\frac{q}{t}+2+q+2q^{3}+2q^{2}t+2q^{4}t+q^{3}t^{2}+2q^{6}t^{2}+q^{7}t^{3}.$$ From this polynomial, it is not difficult to see that the two surviving copies of $\QQ$ in Lee's spectral sequence are in degrees $1$ and $3$. It follows these three knots have Rasmussen invariant equal to $2$.

Since each of these virtual knots has nonzero Rasmussen invariant, none of them are slice. Further, notice that, for each of 6.31460, 6.52378, and 6.66907, performing a crossing change to one of the crossings in the clasp produces a diagram of the unknot. Since a crossing change can be achieved in a genus one cobordism, it follows that each of the three virtual knots has slice genus equal to one.

\vspace{.3cm} 
\begin{figure}[h!]
\centering\includegraphics[scale=1.30]{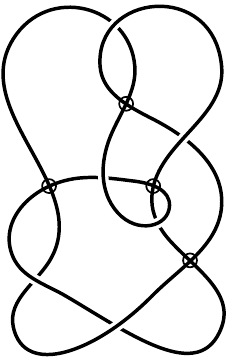}
\hspace{1.2cm} 
\includegraphics[scale=1.3]{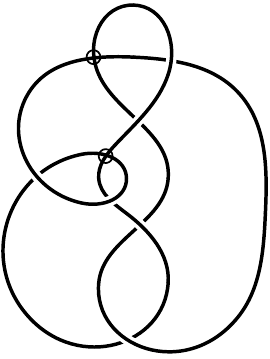}
\hspace{1.2cm} 
\includegraphics[scale=1.3]{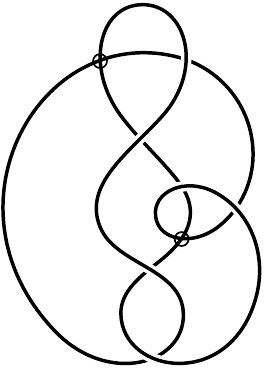}
\caption{The non-alternating virtual knots $6.31460,$ $6.52378$ and $6.66907,$ from left to right.}
\label{Fig:non-slice}
\end{figure}
\vspace{.3cm}


\end{example}
%
%
%


\section{Alternating Virtual Knots and Their Invariants}\label{five}

The signatures  are computed using a Mathematica program written by
Micah Chrisman, and the Khovanov homology (unlisted) is computed using online Mathematica program written by Daniel Tubbenhauer. The Rasmussen invariants are then computed by hand using Lee's spectral sequence. Boldface font is used to indicate that the knot is classical. Here the decimal numbers refer to the virtual knots in Green's tabulation \cite{Green}. For a list of the associated Gauss words, see \cite{Karimi}.  

In the following example, we outline how to calculate the Rasmussen invariant of a virtual knot once its Khovanov homology has been determined.

\begin{example}
For the alternating virtual knot $K=6.90170$ depicted in Figure \ref{fig-6-90170}, the Khovanov homology is recorded in Table \ref{690170}. In the spectral sequence, $E_{\infty}\cong E_5$, and $E_4\cong E_2$. This fact dictates the exact location of the one nontrivial $d_4$, which is from the $(-3,-9)$-entry to the $(-2,-5)$-entry of Table \ref{690170}.  Now the surviving copies of $\QQ$ in $E_{\infty}$, are in the entries $(0,-1)$ and $(0,-3)$, which shows that this knot has Rasmussen invariant $s=-2$.

\vspace{.3cm}
\begin{table}[h!]
\begin{center}
{\rowcolors{1}{white!80!white!50}{lightgray!70!lightgray!40}
\begin{tabular}{|c|c|c|c|c|}  \hline
$j\setminus i$ & $-3$ & $-2$ & $-1$ & 0 \\ \hline \hline 
 $-1$  &  &    &  &  $\QQ$\\
 \hline 
       $-3$      &&&& $\QQ$\\
       \hline
             $-5$  &  &$\QQ$    &  &  \\
             \hline
             $-7$  &  &    &  &  \\
             \hline
             $-9$  &$\QQ$  &    &  &  \\
\hline
\end{tabular}
}  
\end{center}
\caption{The Khovanov homology for the alternating virtual knot $K=6.90170$.}\label{690170}
\end{table}
\vspace{.3cm}

\end{example}
 
\bigskip \noindent
{\bf Acknowledgements.} I would like to thank my adviser, Hans U. Boden, for all of his support. I would also like to thank Micah Chrisman for providing the Mathematica package used to compute the signatures. 

\newpage
\begin{table}[H] 
\tiny
 \begin{tabular}{cccc}

{\rowcolors{1}{white!80!white!50}{white!70!white!40}
\begin{tabular}{|l|c|c|l|}
\hline
\tiny

{Virtual} &  \tiny {Signatures}    & \tiny{Rasmussen} & \tiny{Khovanov}  \\ 
\ \tiny {Knot} &   \tiny    $(\sigma_\xi^*,\sigma_{\xi})$            & \tiny{Invariant}&\tiny{Polynomial}
\\ \hline \hline
{\bf 3.6} & $(2,2)$  &   $-2$& $1/q^{9}t^{3}+1/q^{5}t^{2}+1/q^{3}+1/q$ \\ \hline
3.7& $(0,2)$  &   0  &  $1/q+q$ \\ \hline
4.105& $(0,2)$ &$-2$ &$1/q^{9}t^{3}+1/q^{5}t^{2}+1/q^{3}+1/q$ \\ \hline
4.106& $(0,2)$ &0 &$1/q+q$\\ \hline
4.107& $(-2,2)$  &0&$1/q+q$ \\ \hline
{\bf 4.108}& $(0,0)$  &0 &$ 1/q^5t^2 + 1/qt+1/q + q + qt + q^5t^2$\\ \hline
5.2426 & $(2,4)$  &$-4$ &$ 1/q^{11} t^3+1/q^9 t^3+1/q^7 t^2+1/q^5 t^2 +1/q^5 + 1/q^3   $\\ \hline
5.2427 & $(0,4)$  & $-2$ &$1/q^{9}t^{3}+1/q^{5}t^{2}+1/q^{3}+1/q$\\ \hline
5.2428 & $(0,2)$  & 0 &$1/q^7 t^3+ 1/q^3 t^2 +2/q +  q + q^3 t$ \\ \hline
5.2429 & $(2,4)$  & $-2$ &$1/q^{11} t^4+1/q^9 t^3+1/q^7 t^3+1/q^5 t^2 +1/q^5t + 1/q^3+2/q$\\ \hline
5.2430 & $(0,2)$  & 0 &$1/q+q$\\ \hline
5.2431 & $(-2,2)$  & 0 &$1/q+q$\\ \hline
5.2432 & $(-2,2)$  & 0 &$1/q+q$\\ \hline
5.2433 & $(0,4)$  & $-4$&$ 1/q^{11} t^3+1/q^9 t^3+1/q^7 t^2+1/q^5 t^2 +1/q^5 + 1/q^3 $ \\ \hline
5.2434 & $(0,4)$  & $-2$ &$1/q^{9}t^{3}+1/q^{5}t^{2}+1/q^{3}+1/q$\\ \hline
5.2435 & $(0,2)$  & 0 &$1/q^7 t^3+ 1/q^3 t^2 +2/q +  q + q^3 t$\\ \hline
5.2436 & $(-2,2)$  & 0 &$1/q+q$\\ \hline
{\bf 5.2437} & $(2,2)$  & $-2$ &$1/q^{13}t^{5}+1/q^{9}t^{4}+ 1/q^{9} t^3+1/q^7 t^2+1/q^5 t^2 +1/q^3 t+1/q^3 + 1/q   $\\ \hline
5.2438 & $(0,2)$  & $-2$ &$1/q^{9}t^{3}+1/q^{5}t^{2}+1/q^{3}+1/q$\\ \hline
5.2439& $(0,2)$  & 0 &$1/q+q$\\ \hline
5.2440& $(0,2)$  & 0 &$1/q+q$\\ \hline
5.2441 & $(-2,2)$  & 0& $1/q+q$\\ \hline
5.2442 & $(-2,2)$  & 0 &$1/q+q$\\ \hline
5.2443& $(-2,0)$  & 0 &$ 1/q^5t^2 + 1/qt+1/q + q + qt + q^5t^2$\\ \hline
5.2444& $(-2,0)$  & 0 &$1/q+q$\\ \hline
{\bf 5.2445} & $(4,4)$ & $-4$ &$1/q^{15}t^{5}+1/q^{11}t^{4}+ 1/q^{11} t^3+1/q^7 t^2+1/q^5  +1/q^3    $\\ \hline
5.2446 & $(2,4)$ &  $-2$ &$1/q^{9}t^{3}+1/q^{5}t^{2}+1/q^{3}+1/q$\\ \hline
5.2447& $(0,2)$ & 0 &$1/q+q$\\ \hline
5.2448 & $(0,2)$ & 0 &$1/q+q$\\ \hline
{\bf 6.89187}& $(4,4)$ &$-4$ &$1/q^{17}t^{6}+1/q^{15}t^{5}+1/q^{13}t^{5}+1/q^{11}t^{4}+ 2/q^{11} t^3+2/q^7 t^2+1/q^5  +1/q^3    $\\ \hline
6.89188 & $(2,4)$ &$-2$ &$1/q^{9}t^{3}+1/q^{5}t^{2}+1/q^{3}+1/q$\\ \hline
6.89189 & $(0,2)$ & $-2$ &$1/q^{9}t^{3}+1/q^{5}t^{2}+1/q^{3}+1/q$\\ \hline
{\bf 6.89198}& $(0,0)$ & 0 &$ 1/q^7t^3 +1/q^3t^2 + 1/q^3t+2/q + 2q + q^3t + q^3t^2+ q^7t^3$\\ \hline
6.90101 & $(0,4)$ & 0 &$1/q+q$\\ \hline
6.90102 & $(0,4)$ & 0 &$1/q+q$\\ \hline
6.90103 & $(-2,2)$ & 0 &$1/q+q$\\ \hline
6.90104 & $(0,4)$ & 0& $1/q+q$\\ \hline
6.90105 & $(-2,2)$ &0 &$1/q+q$\\ \hline
6.90106 & $(-2,2)$ & 0 &$1/q+q$\\ \hline
6.90107 & $(0,4)$ & 0 &$1/q+q$\\ \hline
6.90108& $(-2,2)$ & 0 &$1/q+q$\\ \hline
6.90109 & $(2,4)$ & $-4$&$ 1/q^{11} t^3+1/q^9 t^3+1/q^7 t^2+ 1/q^5 t^2 +1/q^5 + 1/q^3 $ \\ \hline
6.90110 & $(2,4)$ & $-2$ & $1/q^{13}t^{5}+1/q^{11}t^{4}+1/q^{9}t^{4}+ 1/q^{9} t^3 +1/q^7 t^3+1/q^7 t^2+1/q^5 t^2 $ \\  
&&& \quad $+1/q^5 t +1/q^3 t+1/q^3 + 2/q$\\ \hline
6.90111 & $(0,4)$ & $-2$ &$1/q^{9}t^{3}+1/q^{5}t^{2}+1/q^{3}+1/q$\\ \hline
6.90112 & $(0,4)$ & $-2$ &$1/q^{9}t^{3}+1/q^{5}t^{2}+1/q^{3}+1/q$\\ \hline
6.90113 & $(0,4)$ & $-2$ &$1/q^{9}t^{3}+1/q^{5}t^{2}+1/q^{3}+1/q$\\ \hline
6.90114 & $(0,4)$ & $-2$ &$1/q^{9}t^{3}+1/q^{5}t^{2}+1/q^{3}+1/q$\\ \hline
6.90115 & $(0,2)$ &  $-2$ &$1/q^9 t^3+1/q^7 t^2+ 1/q^5 t^2+ 1/q^3 t +1/q^3+1/q + t/ q + q^3 t^2$\\ \hline
6.90116 & $(0,2)$ & $0$ &$1/q^7 t^3+1/q^3 t^2+  2/q + q + q^3 t$\\ \hline
6.90117 & $(0,4)$ & $-2$ &$1/q^{11}t^{4}+1/q^{9}t^{3}+1/q^{7}t^{3}+ 1/q^{5} t^2+1/q^5 t+1/q^3  +2/q    $\\ \hline
6.90118 & $(0,2)$ & $0$ &$1/q^7 t^2+1/q^3 t+  t/q +1/q+ q + q^3 t^2$\\ \hline
6.90119 & $(0,4)$ & 0 &$1/q+q$\\ \hline
6.90120 & $(-2,2)$& 0 &$1/q+q$\\ \hline
6.90121 & $(0,4)$ & 0 &$1/q+q$\\ \hline
6.90122 & $(-2,2)$ &0 &$1/q+q$\\ \hline
6.90123 & $(0,2)$ &  $0$& $1/q^7 t^3+1/q^5 t^2+1/q^3 t^2+1/q t+  2/q + q +qt+ q^3 t+q^5 t^2$\\ \hline

\end{tabular}
}
\end{tabular}
\bigskip
\end{table}

\newpage

\begin{table}[H] 
\tiny
 \begin{tabular}{cccc}

{\rowcolors{1}{white!80!white!50}{white!70!white!40}
\begin{tabular}{|l|c|c|l|}
\hline
\tiny

{Virtual} &  \tiny {Signatures}    & \tiny{Rasmussen} & \tiny{Khovanov}  \\ 
\ \tiny {Knot} &   \tiny    $(\sigma_\xi^*,\sigma_{\xi})$            & \tiny{Invariant}&\tiny{Polynomial}
\\ \hline \hline

6.90124 & $(-2,2)$ & $0$& $1/q^3 t+  1/q + 2q + q^3 t^2+q^7 t^3$\\ \hline
6.90125 & $(0,4)$ &  $-2$&$1/q^{9}t^{3}+1/q^{5}t^{2}+1/q^{3}+1/q$ \\ \hline
6.90126 & $(0,2)$ &  0&$1/q+q$ \\ \hline
6.90127 & $(-2,4)$ & 0 &$1/q+q$\\ \hline
6.90128 & $(-2,2)$ & 0 &$1/q+q$\\ \hline
6.90129 & $(-2,4)$ & 0 &$1/q+q$\\ \hline
6.90130 & $(-2,2)$ & 0 &$1/q+q$\\ \hline
6.90131 & $(-2,2)$ & $0$ &$1/q^5 t^2+  1/qt + 1/q+q + q t+q^5 t^2$\\ \hline
6.90132 & $(-2,2)$ & 0 &$1/q+q$\\ \hline
6.90133 & $(-2,2)$ & 0 &$1/q+q$\\ \hline
6.90134 & $(-2,2)$ & 0 &$1/q+q$\\ \hline
6.90135 & $(-4,2)$ & 0 &$1/q+q$\\ \hline
6.90136 & $(-2,2)$ & 0 &$1/q+q$\\ \hline
6.90137 & $(-4,2)$ & 0 &$1/q+q$\\ \hline
6.90138 & $(-4,0)$ & $2$&$q+q^{3}+q^{5}t^{2}+q^{9}t^{3}$ \\ \hline
6.90139 & $(2,4)$ & $-4$ &$1/q^{15}t^{6}+1/q^{11}t^{5}+1/q^{11}t^{3}+ 2/q^{9} t^3+1/q^7 t^2+2/q^5 t^2 +1/q^5 +1/q^3    $\\ \hline
6.90140 & $(0,4)$ & $-2$ &$1/q^{11}t^{4}+1/q^{9}t^{3}+ 1/q^{7} t^3+1/q^5 t^2+1/q^5 t +1/q^3 +2/q    $\\ \hline
6.90141 & $(-2,4)$ & 0&$1/q+q$ \\ \hline
6.90142 & $(-2,2)$ & $0$&$1/q^7 t^2+1/q^3 t+  1/q + q + t/q+q^3 t^2$ \\ \hline
6.90143 & $(-2,2)$ &0 &$1/q+q$\\ \hline
6.90144 & $(0,4)$ &  $0$&$1/q^{9}t^{4}+1/q^{5}t^{3}+1/q^{3}t+1/q+2q$ \\ \hline
6.90145 & $(-2,2)$ &  $0$&$1/q^{5}t^{2}+1/qt+1/q+q+qt+q^5t^2$ \\ \hline
6.90146 & $(-2,2)$ &  $0$ &$2/q^{5}t^{2}+2/qt+1/q+q+2qt+2q^5t^2$\\ \hline
6.90147& $(0,4)$ &  $-4$ &$1/q^{11}t^{3}+ 1/q^{9} t^3+1/q^7 t^2+1/q^5 t^2 +1/q^5 +1/q^3    $\\ \hline
6.90148& $(0,4)$ &  $-2$ &$1/q^{9}t^{3}+1/q^{5}t^{2}+1/q^{3}+1/q$\\ \hline
6.90149 & $(0,4)$ & $-2$ &$1/q^{9}t^{3}+1/q^{5}t^{2}+1/q^{3}+1/q$\\ \hline
6.90150 & $(0,2)$ &$-2$& $1/q^{9}t^{3}+1/q^{7}t^{2}+1/q^{5}t^{2}+1/q^{3}t+1/q^3+1/q+t/q+q^3t^2$\\ \hline
6.90151 & $(0,4)$ & $-2$&$1/q^{11}t^{4}+1/q^{9}t^{3}+ 1/q^{7} t^3+1/q^5 t^2+1/q^5 t +1/q^3 +2/q    $ \\ \hline
6.90152 & $(0,4)$ & 0 &$1/q+q$\\ \hline
6.90153 & $(0,4)$ & 0 &$1/q+q$\\ \hline
6.90154 & $(0,2)$ &  $0$&$1/q^7 t^3+1/q^5 t^2+1/q^3 t^2+1/q t+  2/q + q + qt+q^3t+ q^5t^2$ \\ \hline
6.90155 & $(0,2)$ & $-2$ &$1/q^{9}t^{3}+1/q^{5}t^{2}+1/q^{3}+1/q$\\ \hline
6.90156 & $(0,4)$ & 0 &$1/q+q$\\ \hline
6.90157 & $(-2,4)$ &  0&$1/q+q$ \\ \hline
6.90158 & $(-2,2)$ &  $0$ &$1/q^5 t^2+1/q t+  1/q + q + qt+ q^5t^2$\\ \hline
6.90159 & $(-2,2)$ &  0 &$1/q+q$\\ \hline
6.90160 & $(-2,2)$ &  0 &$1/q+q$\\ \hline
6.90161 & $(-2,2)$ & $0$ &$1/q+q$\\ \hline
6.90162 & $(-2,2)$ & 0 &$1/q+q$\\ \hline
6.90163 & $(-2,2)$ & 0 &$1/q+q$\\ \hline
6.90164 & $(-2,2)$ &$0$ &$1/q^3 t+  1/q + 2q + q^3t^2+ q^7t^3$\\ \hline
6.90165 & $(0,2)$ & $0$ &$1/q^{9}t^{4}+1/q^{5}t^{3}+1/q^{3}t+1/q+2q$\\ \hline
6.90166 & $(-2,2)$ & $0$ &$ 1/q^5t^2 + 1/qt+1/q + q + qt + q^5t^2$\\ \hline
6.90167 & $(2,4)$ &  $-4$ &$1/q^{15} t^5+1/q^{11} t^4+1/q^{11} t^3 +1/q^7 t^2+1/q^5 + 1/q^3   $\\ \hline
6.90168 & $(2,4)$ & $-2$ &$1/q^{13} t^5+1/q^{9} t^4+1/q^{9} t^3+1/q^{7} t^2+1/q^{5} t^2 +1/q^3 t+1/q^3 + 1/q   $\\ \hline
6.90169 & $(0,4)$ &  $-2$ &$1/q^{9}t^{3}+1/q^{5}t^{2}+1/q^{3}+1/q$\\ \hline
6.90170 & $(0,2)$ & $-2$ &$1/q^{9}t^{3}+1/q^{5}t^{2}+1/q^{3}+1/q$\\ \hline
6.90171 & $(0,2)$ & 0 &$1/q+q$\\ \hline
{\bf 6.90172} & $(0,0)$ & $0$ &$1/q^{7}t^{3}+1/q^{5}t^{2}+ 1/q^{3} t^2+1/q^3 t+1/qt +2/q +2q + qt+ q^3t$
\\   
&&& \quad $ + q^3t^2+ q^5t^2+ q^7t^3   $\\ \hline
6.90173& $(0,4)$ &  0 &$1/q+q$\\ \hline
6.90174 & $(-2,4)$ & 0 &$1/q+q$\\ \hline
6.90175 & $(-2,2)$ &  0 &$1/q+q$\\ \hline
6.90176 & $(-2,4)$ &  0 &$1/q+q$\\ \hline
6.90177 & $(-2,2)$ &  0 &$1/q+q$\\ \hline
6.90178 & $(-2,2)$ &  0 &$1/q+q$\\ \hline
6.90179 & $(0,4)$ &  0 &$1/q+q$\\ \hline
\end{tabular}
}
\end{tabular}
\bigskip
\end{table}

\newpage

\begin{table}[H] 
\tiny
 \begin{tabular}{cccc}

{\rowcolors{1}{white!80!white!50}{white!70!white!40}
\begin{tabular}{|l|c|c|l|}
\hline
\tiny

{Virtual} &  \tiny {Signatures}    & \tiny{Rasmussen} & \tiny{Khovanov}  \\ 
\ \tiny {Knot} &   \tiny    $(\sigma_\xi^*,\sigma_{\xi})$            & \tiny{Invariant}&\tiny{Polynomial}
\\ \hline \hline

6.90180 & $(-2,4)$ &  0 &$1/q+q$\\ \hline
6.90181 & $(-2,2)$ & 0 &$1/q+q$\\ \hline
6.90182 & $(-2,2)$ &  0 &$1/q+q$\\ \hline
6.90183 & $(-2,2)$ &  $0$&$ 1/q^5t^2 + 1/qt+1/q + q + qt + q^5t^2$ \\ \hline
6.90184 & $(-4,0)$ &  $2$ &$q+q^{3}+q^{5}t^{2}+q^{9}t^{3}$\\ \hline
6.90185 & $(0,4)$ &  $-4$ &$ 1/q^{11} t^3+1/q^9 t^3+1/q^7 t^2+1/q^5 t^2 +1/q^5 + 1/q^3 $\\ \hline
6.90186 & $(0,4)$ &  $-2$ &$1/q^{9}t^{3}+1/q^{5}t^{2}+1/q^{3}+1/q$\\ \hline
6.90187 & $(0,4)$ &  $-2$ &$ 1/q^{11} t^4+1/q^9 t^3+1/q^7 t^3+1/q^5 t^2 +1/q^5t + 1/q^3+2/q $\\ \hline
6.90188 & $(-2,4)$ & 0 &$1/q+q$\\ \hline
6.90189 & $(-2,2)$ &  0 &$1/q+q$\\ \hline
6.90190 & $(-2,2)$ &  0 &$1/q+q$\\ \hline
6.90191 & $(0,2)$ &  $0$ &$ 1/q^{7} t^2+1/q^3 t+ 1/q+q+t/q +1/q^3 t^2  $\\ \hline
6.90192 & $(0,4)$ & $0$ &$1/q^{9}t^{4}+1/q^{5}t^{3}+1/q^{3}t+1/q+2q$\\ \hline
6.90193 & $(-2,2)$ & $0$ &$1/q^{5}t^{2}+1/qt+1/q+q+qt+q^5t^2$\\ \hline
6.90194 & $(0,2)$ &  $0$ &$1/q+q$\\ \hline
6.90195 & $(2,4)$ & $-4$ &$1/q^{15} t^5+1/q^{11} t^4+1/q^{11} t^3 +1/q^7 t^2+1/q^5 + 1/q^3   $\\ \hline
6.90196 & $(0,4)$ & $-2$ &$1/q^{9}t^{3}+1/q^{5}t^{2}+1/q^{3}+1/q$\\ \hline
6.90197 & $(0,4)$ & $-2$ &$1/q^{9}t^{3}+1/q^{5}t^{2}+1/q^{3}+1/q$\\ \hline
6.90198 & $(-2,2)$ & 0 &$1/q+q$\\ \hline
6.90199 & $(-2,2)$ &  0 &$1/q+q$\\ \hline
6.90200 & $(-2,0)$ &  0 &$1/q+q$\\ \hline
6.90201 & $(2,4)$ & $-2$ &$1/q^{9}t^{3}+1/q^{5}t^{2}+1/q^{3}+1/q$\\ \hline
6.90202 & $(0,4)$ &0 &$1/q+q$\\ \hline
6.90203 & $(0,4)$ & 0 &$1/q+q$\\ \hline
6.90204 & $(-2,2)$ & 0 &$1/q+q$\\ \hline
6.90205 & $(0,4)$ &0 &$1/q+q$\\ \hline
6.90206 & $(-2,2)$ &  0&$1/q+q$ \\ \hline
6.90207 & $(-2,2)$ &  0 &$1/q+q$\\ \hline
6.90208 & $(-4,0)$ &  $2$&$q+q^{3}+q^{5}t^{2}+q^{9}t^{3}$ \\ \hline
{\bf 6.90209} & $(2,2)$ &  $-2$& $ 1/q^{11} t^4+1/q^9 t^3+1/q^7 t^3+1/q^7 t^2+1/q^5 t^2 +1/q^5t+1/q^3t + 1/q^3$\\  
&&& \quad $+2/q+t/q+q^3t^2$\\ \hline
6.90210 & $(0,2)$ &  $0$ &$ 1/q^5t^2 + 1/qt+1/q + q + qt + q^5t^2$\\ \hline
6.90211 & $(-2,0)$ & 0 &$1/q+q$\\ \hline
6.90212 & $(-2,0)$ &  0 &$1/q+q$\\ \hline
6.90213 & $(-4,-2)$ & $2$& $q+q^{3}+q^3t+q^{5}t^{2}+q^{7}t^{2}+q^{9}t^{3}+q^{9}t^{4}+q^{13}t^{5}$\\ \hline
6.90214 & $(0,2)$ &  $-2$ &$1/q^{13} t^5+1/q^{9} t^4+1/q^{9} t^3+1/q^{7} t^2+1/q^{5} t^2 +1/q^3 t+1/q^3 + 1/q   $\\ \hline
6.90215 & $(0,2)$ &  $-2$ &$1/q^{9}t^{3}+1/q^{5}t^{2}+1/q^{3}+1/q$\\ \hline
6.90216 & $(0,2)$ &  $-2$ &$1/q^{9}t^{3}+1/q^{5}t^{2}+1/q^{3}+1/q$\\ \hline
6.90217 & $(0,2)$ &  0 &$1/q+q$\\ \hline
6.90218 & $(0,2)$ &  0 &$1/q+q$\\ \hline
6.90219 & $(0,2)$ &  0 &$1/q+q$\\ \hline
6.90220 & $(0,2)$ &  $0$&$ 1/q^5t^2 + 1/qt+1/q + q + qt + q^5t^2$ \\ \hline
6.90221 & $(0,2)$ &  0 &$1/q+q$\\ \hline
6.90222 & $(-2,2)$ &  0 &$1/q+q$\\ \hline
6.90223 & $(-2,2)$ &  0 &$1/q+q$\\ \hline
6.90224 & $(-2,2)$ &  0 &$1/q+q$\\ \hline
6.90225 & $(-2,2)$ &  0 &$1/q+q$\\ \hline
6.90226 & $(-2,2)$ &  0 &$1/q+q$\\ \hline
{\bf 6.90227} & $(0,0)$ & $0$ &$ 1/q^{9} t^4+1/q^5 t^3+1/q^5 t^2+1/q^3 t+1/qt + 1/q+2q+qt+q^5t^2$\\ \hline
6.90228 & $(0,2)$ &  $-2$ &$2/q^{13} t^5+2/q^{9} t^4+1/q^{9} t^3+2/q^{7} t^2+1/q^{5} t^2 +2/q^3 t+1/q^3 + 1/q   $\\ \hline
6.90229 & $(0,2)$ &  $-2$ &$1/q^{9}t^{3}+1/q^{5}t^{2}+1/q^{3}+1/q$\\ \hline
6.90230 & $(-2,2)$ &  0 &$1/q+q$\\ \hline
6.90231 & $(-2,2)$ &  0 &$1/q+q$\\ \hline
6.90232 & $(0,2)$ &  $0$ &$1/q^{9}t^{4}+1/q^{5}t^{3}+1/q^{3}t+1/q+2q$\\ \hline
6.90233  & $(-2,2)$ &  $0$& $ 1/q^5t^2 + 1/qt+1/q + q + qt + q^5t^2$ \\ \hline
6.90234 & $(-2,2)$ &  0 &$1/q+q$\\ \hline
6.90235 & $(-2,2)$ & 0 &$1/q+q$\\ \hline
\end{tabular}
}
\end{tabular}
\bigskip
\caption{The signatures, Rasmussen invariants, and Khovanov polynomials of alternating virtual knots.}\label{ras-table}
\end{table}

\bibliographystyle{halpha}    
\begin{bibdiv}
\begin{biblist}

\bib{Boden2019}{misc}{
      author={Boden, Hans~U.},
      author={Chrisman, Micah},
       title={Virtual concordance and the generalized {A}lexander polynomial},
        date={2019},
  note={\href{https://arxiv.org/pdf/1903.08737.pdf}{ArXiv:math/1903.08737}},
}

\bib{BCG17}{misc}{
      author={Boden, Hans~U.},
      author={Chrisman, Micah},
      author={Gaudreau, Robin},
       title={Signature and concordance of virtual knots},
        date={2017},
        note={\href{https://arxiv.org/pdf/1708.08090.pdf}{ArXiv/1708.08090}, to
  appear in Indiana Univ. Math. J.},
}

\bib{Boden2}{misc}{
      author={Boden, Hans~U.},
      author={Chrisman, Micah},
      author={Gaudreau, Robin},
       title={Virtual knot cobordism and bounding the slice genus},
        date={2017},
        note={\href{https://arxiv.org/pdf/1708.05982.pdf}{ArXiv/1708.05982},
  published online 02 Feb 2018 in Experiment. Math.},
}

\bib{MTables}{misc}{
      author={Boden, Hans~U.},
      author={Chrisman, Micah},
      author={Gaudreau, Robin},
       title={Virtual slice genus tables},
        date={2017},
         url={https://micah46.wixsite.com/micahknots/slicegenus},
  note={\href{https://micah46.wixsite.com/micahknots/slicegenus}{https://micah46.wixsite.com/micahknots/slicegenus}},
}

\bib{Bar-natan-02}{article}{
      author={Bar-Natan, Dror},
       title={On {K}hovanov's categorification of the {J}ones polynomial},
        date={2002},
        ISSN={1472-2747},
     journal={Algebr. Geom. Topol.},
      volume={2},
       pages={337\ndash 370},
         url={https://doi.org/10.2140/agt.2002.2.337},
      review={\MR{1917056}},
}

\bib{Bar14}{article}{
       author={Bar-Natan, Dror},
       author={Burgos-Soto, Hernando},
    title = {Khovanov homology for alternating tangles},
    date = {2014},
     ISSN = {0218-2165},
   journal = {J. Knot Theory Ramifications},
   volume = {23},
   number={2},
     pages = {1450013, 22},
    url = {https://doi-org.libaccess.lib.mcmaster.ca/10.1142/S0218216514500138},
     review={\MR{3197057}},
  
}

\bib{SurTur}{article}{
   author = {Champanerkar, Abhijit},
     author={Kofman, Ilya},
      title = {A survey on the Turaev genus of knots},
      date = {2014},
       ISSN = {0251-4184},
   journal = {Acta Math. Vietnam.},
 volume = {39},
  number={4},
      pages = {497\ndash 514},
     url = {https://doi-org.libaccess.lib.mcmaster.ca/10.1007/s40306-014-0083-y},
     review={\MR{3292579}},
}

\bib{Kofman07}{article}{
      author={Champanerkar, Abhijit},
      author={Kofman, Ilya},
      author={Stoltzfus, Neal},
       title={Graphs on surfaces and {K}hovanov homology},
        date={2007},
        ISSN={1472-2747},
     journal={Algebr. Geom. Topol.},
      volume={7},
       pages={1531\ndash 1540},
  url={https://doi-org.libaccess.lib.mcmaster.ca/10.2140/agt.2007.7.1531},
      review={\MR{2366169}},
}

\bib{Knotinfo}{misc}{
      author={Cha, Jae~Choon},
      author={Livingston, Charles},
       title={Table of knot invariants},
         url={http://www.indiana.edu/~knotinfo/},
  note={\href{http://www.indiana.edu/~knotinfo/}{http://www.indiana.edu/~knotinfo/}},
}

\bib{Tur}{article}{
      author={Dasbach, Oliver~T.},
      author={Futer, David},
      author={Kalfagianni, Efstratia},
      author={Lin, Xiao-Song},
      author={Stoltzfus, Neal~W.},
       title={The {J}ones polynomial and graphs on surfaces},
        date={2008},
        ISSN={0095-8956},
     journal={J. Combin. Theory Ser. B},
      volume={98},
      number={2},
       pages={384\ndash 399},
  url={https://doi-org.libaccess.lib.mcmaster.ca/10.1016/j.jctb.2007.08.003},
      review={\MR{2389605}},
}

\bib{Boundary}{book}{
      author={Dye, Heather~A.},
       title={An invitation to knot theory, virtual and classical},
   publisher={CRC Press, Boca Raton, FL},
        date={2016},
        ISBN={978-1-4987-0164-8},
      review={\MR{3468517}},
}

\bib{DKK}{article}{
      author={Dye, Heather~A.},
      author={Kaestner, Aaron},
      author={Kauffman, Louis~H.},
       title={Khovanov homology, {L}ee homology and a {R}asmussen invariant for
  virtual knots},
        date={2017},
        ISSN={0218-2165},
     journal={J. Knot Theory Ramifications},
      volume={26},
      number={3},
       pages={1741001, 57},
  url={https://doi-org.libaccess.lib.mcmaster.ca/10.1142/S0218216517410012},
      review={\MR{3627701}},
}

\bib{Green}{article}{
      author={Green, Jeremy},
       title={A table of virtual knots},
        date={2004},
     journal={\url{http://www.math.toronto.edu/drorbn/Students/GreenJ}},
}

\bib{Im-Lee-Lee-2010}{article}{
      author={Im, Young~Ho},
      author={Lee, Kyeonghui},
      author={Lee, Sang~Youl},
       title={Signature, nullity and determinant of checkerboard colorable
  virtual links},
        date={2010},
        ISSN={0218-2165},
     journal={J. Knot Theory Ramifications},
      volume={19},
      number={8},
       pages={1093\ndash 1114},
         url={http://dx.doi.org/10.1142/S0218216510008315},
      review={\MR{2718629}},
}

\bib{Kamada-2002}{article}{
      author={Kamada, Naoko},
       title={On the {J}ones polynomials of checkerboard colorable virtual
  links},
        date={2002},
        ISSN={0030-6126},
     journal={Osaka J. Math.},
      volume={39},
      number={2},
       pages={325\ndash 333},
         url={http://projecteuclid.org/euclid.ojm/1153492771},
      review={\MR{1914297}},
}

\bib{KK00}{article}{
      author={Kamada, Naoko},
      author={Kamada, Seiichi},
       title={Abstract link diagrams and virtual knots},
        date={2000},
        ISSN={0218-2165},
     journal={J. Knot Theory Ramifications},
      volume={9},
      number={1},
       pages={93\ndash 106},
         url={http://dx.doi.org/10.1142/S0218216500000049},
      review={\MR{1749502 (2001h:57007)}},
}

\bib{Kamada-skein}{article}{
      author={Kamada, Naoko},
      author={Nakabo, Shigekazu},
      author={Satoh, Shin},
       title={A virtualized skein relation for {J}ones polynomials},
        date={2002},
        ISSN={0019-2082},
     journal={Illinois J. Math.},
      volume={46},
      number={2},
       pages={467\ndash 475},
  url={http://projecteuclid.org.libaccess.lib.mcmaster.ca/euclid.ijm/1258136203},
      review={\MR{1936929}},
}

\bib{Karimi}{thesis}{
      author={Karimi, Homayun},
       title={Alternating {V}irtual {K}nots},
        type={Ph.D. Thesis},
        school={McMaster University},
        date={2018},
}

\bib{Khovanov-00}{article}{
      author={Khovanov, Mikhail},
       title={A categorification of the {J}ones polynomial},
        date={2000},
        ISSN={0012-7094},
     journal={Duke Math. J.},
      volume={101},
      number={3},
       pages={359\ndash 426},
         url={https://doi.org/10.1215/S0012-7094-00-10131-7},
      review={\MR{1740682}},
}

\bib{KM-2011}{article}{
      author={Kronheimer, Peter~B.},
      author={Mrowka, Tomasz~S.},
       title={Khovanov homology is an unknot-detector},
        date={2011},
        ISSN={0073-8301},
     journal={Publ. Math. Inst. Hautes \'Etudes Sci.},
      number={113},
       pages={97\ndash 208},
  url={https://doi-org.libaccess.lib.mcmaster.ca/10.1007/s10240-010-0030-y},
      review={\MR{2805599}},
}

\bib{Lee}{article}{
      author={Lee, Eun~Soo},
       title={An endomorphism of the {K}hovanov invariant},
        date={2005},
        ISSN={0001-8708},
     journal={Adv. Math.},
      volume={197},
      number={2},
       pages={554\ndash 586},
  url={https://doi-org.libaccess.lib.mcmaster.ca/10.1016/j.aim.2004.10.015},
      review={\MR{2173845}},
}

\bib{Manturov-kh}{article}{
      author={Manturov, Vassily~O.},
       title={The {K}hovanov polynomial for virtual knots},
        date={2004},
        ISSN={0869-5652},
     journal={Dokl. Akad. Nauk},
      volume={398},
      number={1},
       pages={15\ndash 18},
      review={\MR{2128214}},
}

\bib{Man}{misc}{
      author={Manturov, Vassily~O.},
       title={Minimal diagrams of classical and virtual links},
        date={2005},
  note={\href{https://arxiv.org/pdf/math/0501393.pdf}{ArXiv:math/0501393}},
}

\bib{Manturov-2007}{article}{
      author={Manturov, Vassily~O.},
       title={Khovanov's homology for virtual knots with arbitrary
  coefficients},
        date={2007},
        ISSN={1607-0046},
     journal={Izv. Ross. Akad. Nauk Ser. Mat.},
      volume={71},
      number={5},
       pages={111\ndash 148},
         url={http://dx.doi.org/10.1070/IM2007v071n05ABEH002381},
      review={\MR{2362875}},
}

\bib{Rasmussen}{article}{
      author={Rasmussen, Jacob},
       title={Khovanov homology and the slice genus},
        date={2010},
        ISSN={0020-9910},
     journal={Invent. Math.},
      volume={182},
      number={2},
       pages={419\ndash 447},
  url={https://doi-org.libaccess.lib.mcmaster.ca/10.1007/s00222-010-0275-6},
      review={\MR{2729272}},
}

\bib{Rushworth}{article}{
      author={Rushworth, William},
       title={Doubled {K}hovanov homology},
        date={2018},
        ISSN={0008-414X},
     journal={Canad. J. Math.},
      volume={70},
      number={5},
       pages={1130\ndash 1172},
  url={https://doi-org.libaccess.lib.mcmaster.ca/10.4153/CJM-2017-056-6},
      review={\MR{3831917}},
}

\bib{rush}{article}{
      author={Rushworth, William},
       title={Computations of the slice genus of virtual knots},
        date={2019},
        ISSN={0166-8641},
     journal={Topology Appl.},
      volume={253},
       pages={57\ndash 84},
  url={https://doi-org.libaccess.lib.mcmaster.ca/10.1016/j.topol.2018.11.028},
      review={\MR{3892369}},
}

\bib{Tubbenhauer}{article}{
      author={Tubbenhauer, Daniel},
       title={Virtual {K}hovanov homology using cobordisms},
        date={2014},
        ISSN={0218-2165},
     journal={J. Knot Theory Ramifications},
      volume={23},
      number={9},
       pages={1450046, 91},
  url={https://doi-org.libaccess.lib.mcmaster.ca/10.1142/S0218216514500461},
      review={\MR{3268982}},
}

\bib{Turner}{article}{
      author={Turner, Paul},
       title={Five lectures on {K}hovanov homology},
        date={2017},
        ISSN={0218-2165},
     journal={J. Knot Theory Ramifications},
      volume={26},
      number={3},
       pages={1741009, 41},
         url={https://doi.org/10.1142/S0218216517410097},
      review={\MR{3627709}},
}

\end{biblist}
\end{bibdiv}

\end{document}